% preliminary discussion camino marien mimmo barcelona april/may 2010 and gabriela marien mimmo july 2010
% first draft by Gabriela september 2010
% draft by marien and mimmo september
%% Changes by Gabriela and Camino - 6 dic 2010
%% Pictures and part of introduction added by Marién - January 2011
% revision by marien and then mimmo (and gabriela and camino via email) february march 2011
% final (?) draft by marien and mimmo march/april 2011

\documentclass[10pt]{article}

\usepackage[dvips]{graphicx}
\usepackage[usenames]{color}
\usepackage{amssymb,amsmath,amsfonts}
\usepackage{multirow}
\usepackage{enumerate}
\usepackage{comment}
\usepackage{mdwlist}
%\usepackage[font=small]{caption}

%\usepackage{geometry}

%\geometry{left=0.75in, textwidth=7in}
%\tolerance = 10000
%\parskip 2mm
%\unitlength=1cm

%%PARA DIBUJOS PSTRICKS
\usepackage{epic}
\usepackage{pstricks}
\usepackage{pst-node}
\usepackage{multido}
\usepackage{pstricks-add}
%%%%%%%%%%%%%%%%%%%%%

\flushbottom
\setlength{\parindent}{16pt}
\setlength{\parskip}{0cm}
\setlength{\textheight}{215mm}
\setlength{\textwidth}{140mm}

%\textwidth=15,5cm
%\tolerance = 10000
%\oddsidemargin 1cm \evensidemargin 1cm \textheight=21truecm
%\topmargin=-0.75truecm
%\parskip 2mm
%\unitlength=1cm

\def\BBox{\kern  -0.2cm\hbox{\vrule width 0.2cm height 0.2cm}}

\newtheorem{theorem}{Theorem}[section]
\newtheorem{lemma}[theorem]{Lemma}

\newtheorem{proposition}[theorem]{Proposition}
\newtheorem{remark}[theorem]{Remark}

\title{Families of Small Regular Graphs of Girth $5$}
\author{M. Abreu$^{1}$, G. Araujo-Pardo$^{2}$, C. Balbuena$^{3}$,
D. Labbate$^{4}$
\thanks{ \footnotesize{\em Email addresses:} marien.abreu@unibas.it (M. Abreu),~
garaujo@matem.unam.mx (G. Araujo),
~ m.camino.balbuena@upc.edu (C. Balbuena), \, \, ~ labbate@poliba.it (D. Labbate)}
 \\[2ex]
$^1${\footnotesize Dipartimento di Matematica, Universit\`{a} degli Studi della
Basilicata,}\\
{\footnotesize Viale dell'Ateneo Lucano, I-85100 Potenza, Italy.} \\$^2$
{\footnotesize Instituto de Matem\'{a}ticas, Universidad Nacional Aut\'{o}noma de M\'{e}xico,} \\
{\footnotesize M\'{e}xico D. F., M\'exico }\\
$^3${\footnotesize Departament de Matem\`atica Aplicada III, Universitat
Polit\`ecnica de Catalunya, }\\
{\footnotesize Campus Nord, Edifici C2, C/ Jordi Girona 1 i 3 E-08034 Barcelona,
Spain.} \\
$^4${\footnotesize Dipartimento di Matematica, Politecnico di Bari,  I-70125 Bari ,
Italy.}
\date{}
}

\begin{document}

\renewcommand\floatpagefraction{.9}
\renewcommand\topfraction{.9}
\renewcommand\bottomfraction{.9}
\renewcommand\textfraction{.1}
\setcounter{totalnumber}{50}
\setcounter{topnumber}{50}
\setcounter{bottomnumber}{50}

\maketitle
%%%%%%%%%%%%%%%%%%%%%%%%%%%%%%%%%%%%%%%%%%%%%%

\begin{abstract}

In this paper we obtain $(q+3)$--regular
graphs of girth $5$ with fewer vertices than previously known ones for
$q=13,17,19$ and for any prime $q \ge 23$ performing operations of reductions and amalgams on the Levi graph $B_q$
of an elliptic semiplane of type ${\cal C}$. We also obtain a $13$--regular graph of girth $5$ on $236$ vertices from
$B_{11}$ using the same technique.
%Let $B_q$ be the Levi graph  of an elliptic semiplane of type ${\cal C}$, i.e. a projective plane of order $q$
%minus a Baer subset corresponding to a flag. We perform two
%types of reductions on the graph $B_q$ and then amalgam bi--regular
%graphs into the reduced graph in order to obtain $(q+3)$--regular
%graphs of girth $5$ with fewer vertices than previously known ones for
%%$q=\{13,17,19,23,29\}$
%$q=\{13,17,19\}$ and for any prime $q \ge 23$.
%We also obtain a $13$--regular graph of girth $5$ on $236$ vertices from
%$B_{11}$ using the same technique.
\end{abstract}

\section{Introduction}\label{intro}

All graphs considered are finite, undirected and simple (without loops or
multiple edges). For definitions and notations not explicitly stated the reader may refer to
\cite{BM}.

Let $G = (V (G),E(G))$ be a graph with vertex set $V = V (G)$ and edge set
$E = E(G)$. The {\em girth} of a graph $G$ is the length $g = g(G)$ of its shortest circuit.
The {\em degree} of a vertex $v \in V$ is the number of vertices adjacent to $v$.
A graph is called {\em $k$--regular} if all its vertices have the same degree $k$, and
{\em bi--regular} or {\em $(k_1,k_2)$--regular} if all its vertices have either degree
$k_1$ or $k_2$.
A {\em $(k,g)$--graph} is a $k$--regular graph of girth $g$ and a
{\em $(k,g)$--cage} is a $(k,g)$--graph with the smallest possible number of vertices.
The necessary condition obtained from the distance partition with respect to a vertex
yields a lower bound $n_0(k, g)$ on the number of vertices of a $(k,g)$--graph,
known as the Moore bound.

$$n_0(k, g) =
\left\{ \begin{array}{ll}
1 + k + k(k - 1) + \ldots + k(k - 1)^{(g-3)/2} & \makebox{if} \, g \, \makebox{is odd}; \\
2(1 + (k - 1) + \ldots + (k - 1)^{g/2 - 1})    & \makebox{if} \, g \, \makebox{is even.}
\end{array}\right.
$$

Biggs \cite{Bi96} calls {\em excess} of a $(k,g)$--graph $G$ the difference $|V (G)| - n_0(k,g)$.
Cages have been intensely studied  since they were introduced by Tutte \cite{T47} in $1947$. Erd\H{o}s
and Sachs \cite{ES63} proved the existence of a $(k,g)$--graph for any value of $k$ and $g$.
Since then, most of the work carried out has been focused on constructing smallest $(k,g)$--graphs
(see e.g. \cite{AFLN06, AFLN08, AGMS07, ABH10,AB11,
BaIt73,B66, BrMcK, E96, FeHi, GH08, M99, OW80, OW81, W82}).
Biggs is the author of an impressive report on distinct methods for constructing cubic cages \cite{B98}.
Royle \cite{Royle} keeps a web-site in which all the cages known so far appear.
More details about constructions on cages can be found in the surveys by Wong \cite{W82},
by Holton and Sheehan \cite[Chapter 6]{HoSh93}, or the recent one by Exoo and Jajcay \cite{EJ08}.

A {\em partial plane} is an incidence structure ${\cal I} = ({\cal P}, {\cal L}, |)$
in which two distinct points are incident with at most one line.
In an incidence structure a {\em flag} is an incident point line pair $p_1 | l_1$,
an {\em anti--flag} is an non--incident point line pair $p_1 \nmid l_1$,
two lines are {\em parallel} if there is no point incident with both,
and, dually, two points are {\em parallel} if there is no line incident with both.

A $v_k$--configuration or a configuration of type $v_k$ is a partial plane
consisting of $v$ points and $v$ lines such that each point and each line are
incident with $k$ lines and $k$ points, respectively. A finite {\em elliptic semiplane}
of order $k - 1$ is a $v_k$--configuration satisfying the following axiom of parallels:
for each anti–-flag $p_1 \nmid l_1$, there exists at most one line $l_2$ incident with
$p_1$ and parallel to $l_1$, and at most one point $p_2$ incident with
$l_1$ and parallel to $p_1$ \cite{Demb,FLN}.

A {\em Baer subset} of a finite projective plane $P$ is either a Baer subplane $B$ or, for a
distinguished point–-line pair $(p, l)$, the union $B(p, l)$ of all lines and points
incident with $p$ and $l$, respectively. We write $B(p|l)$ or $B(p \nmid l)$,
according to the incidence or non--incidence of $p$ and $l$.
It was already known to Dembowski \cite{Demb} that elliptic semiplanes are obtained by deleting a
Baer subset from a projective plane. We call any such elliptic semiplane {\em Desarguesian}
if the projective plane from which it is constructed is so. In \cite{Demb} Dembowski classified elliptic
semiplanes into five types. In this paper we will only be concerned with those of type
$\cal C$, which are ${\cal C}_q = PG(2,q) - B(p|l)$, i.e. the complement of a Baer
subset $B(p|l)$ in a desarguesian projective plane $PG(2,q)$, for each prime power $q$.
Hence the elliptic semiplane of type ${\cal C}_q$ is also a configuration of type $(q^2)_q$.

The {\em Levi graph} or {\em incidence graph} $G$ of an incidence structure
${\cal I} = ({\cal P}, {\cal L}, |)$, is a bipartite graph with $V(G) = V_1 \cup V_2$,
where $V_1={\cal P}$ and $V_2={\cal L}$ and two vertices are adjacent in $G$ if and
only if the corresponding point and line are incident in $\cal I$.
Recall that the Levi graph of a finite projective plane is a $(k,6)$--cage, attaining
Moore's bound, i.e. these are Moore graphs \cite{W82}.
%, and we will
%use it to construct some $(q+2,5)$--graphs for $q=11,17$ and $(q+3,5)$--graphs for
%prime powers of the form $q=6m+1$.
%{\bf I believe we should explain/motivate better this otherwise it seems not so relevant}

In this paper we obtain $(q+3)$--regular
graphs of girth $5$ with fewer vertices than previously known ones (cf. \cite{J05,F10}) for
$q=13,17,19$ and for any prime $q \ge 23$ performing operations of reductions (cf. Section \ref{reduc}) on the Levi graph $B_q$
of ${\cal C}_q$ and then amalgams with bi--regular graphs (cf. Section \ref{amal}) into the obtained reduced graph or $B_q$ itself.
We also obtain a new $13$--regular graph of girth $5$ on $236$ vertices from $B_{11}$ using the same technique.

%consider the Levi graph $B_q$  and
%performing two types of reduction operations on $B_q$ and then amalgams in order to obtain $(q+3)$--regular graphs of girth $5$ with
%fewer vertices than previously known ones for %$q=\{13,17,19,23,29\}$
%$q=13,17,19$ and for any prime $q \ge 23$.
%We also obtain a $13$--regular graph of girth $5$ on $236$ vertices from $B_{11}$ using the same technique.
%The smallest $(k,5)$--graphs prior to these appear in \cite{J05,F10}.

%And, in general, when, $q\geq 5$ is a power of prime and $6q+1$ is also a prime we construct graphs that also improve the previously constructed.

\section{Preliminaries}\label{prel}

Throughout the paper we will use the following notation when dealing with the elliptic semiplane of type ${\cal C}_q$.

%
%\textcolor{red}{Throughout the paper we will use the notation described in this section
%when dealing with the elliptic semiplane of type ${\cal C}_q$,
%defined from $PG(2,q)$ for a prime $q$, and all operations will be performed in $GF(q)$.
%}%,with the only exception of Proposition \ref{BqProp}$(v)$ where both groups of $GF(q)$ are needed.

In $PG(2,q)$, choose $p$ and $l$ to be the point and line at infinity, respectively.
Then, in ${\cal C}_q$ it is possible to choose the affine coordinates
$(x,y)$, for the points, and $[m,b]$ for the lines $\{x,y,m,b\}\in GF(q)$,
which imply that the incidence between a point and a line is given by the equation
$y=mx+b$. Recall that in ${\cal C}_q$ vertical lines have been deleted from $PG(2,q)$
along with the point at infinity, the line at infinity and all its points.

%The axiom of parallels for ${\cal C}_q$ as an elliptic semiplane, implies that
%the points and the lines of ${\cal C}_q$ partition into parallel classes
%${\cal P}=\{P_0,...,P_q\}$ and ${\cal L}=\{L_0,...,L_q\}$, respectively,
%according to their first coordinates. Also note that

Define the sets $P_i=\{(i,y) |\ y\in GF(q)\}$ for $i\in GF(q)$ and
$L_j=\{L_j=\{[j,b] |\ b\in GF(q)\}$ for $j\in GF(q)$. These sets
correspond to the partition of the points and lines of ${\cal C}_q$ into parallel
classes, according to the axiom of parallels for elliptic semiplanes.
Note also that if $(x,y) | [m,b]$ then $(x,y+a) | [m,b+a]$ for any $a \in GF(q)$.

The following properties of the Levi graph $B_q$ of ${\cal C}_q$ are well known  and they will be fundamental throughout the paper.

\begin{proposition}\label{BqProp}

Let $B_q$ be the Levi graph of ${\cal C}_q$ then:

\begin{enumerate}[(i)]
\item It is $q$--regular, bipartite, vertex transitive, of order $2q^2$ and has girth $6$;
%\item It has girth $6$;
\item It admits a partition $V_1 = \displaystyle \bigcup^{q-1}_{i=0} P_i$
and $V_2 = \displaystyle \bigcup^{q-1}_{j=0} L_j$ of its vertex set;
\item Each block $P_i$ is connected to each block $L_j$ by a perfect matching,
for $i,j \in GF(q)$;
\item Each vertex in $P_0$ and $L_0$ is connected {\em straight} to all its
neighbours in $B_q$, meaning that for $p=(0,y)$, $N(p)=\{[i,y] | i \in GF(q) \}$
and analogously for $l=[0,b]$, $N(l)=\{(j,b) | j \in GF(q) \}$;
\item The other matchings between $P_i$ and $L_i$ are {\em twisted} and the rule can be
defined algebraically in $GF(q)$.
\end{enumerate}
\end{proposition}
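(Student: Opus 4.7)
The plan is to read off every item directly from the affine coordinatization of ${\cal C}_q = PG(2,q) - B(p|l)$ described in the preliminaries. Since $B(p|l)$ consists of the line $l$ at infinity (with its $q+1$ points) together with the $q+1$ lines through $p$ (one of which is $l$), deleting it from $PG(2,q)$ leaves exactly $q^2 + q + 1 - (q+1) = q^2$ affine points and $q^2$ non-vertical affine lines. Each affine point is incident in $PG(2,q)$ with $q+1$ lines, exactly one of which (the vertical line) passes through $p$ and is deleted; dually for lines. Therefore ${\cal C}_q$ has type $(q^2)_q$, and $B_q$ is a bipartite graph with parts $V_1 = {\cal P}$ and $V_2 = {\cal L}$ of size $q^2$ each, $q$-regular, with $|V(B_q)| = 2q^2$. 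This gives the first part of (i) and the partition of (ii), since the $P_i$'s (resp. $L_j$'s) are $q$ disjoint classes of $q$ points (resp. lines) each.

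For the girth, I would argue that the Levi graph of $PG(2,q)$ is a $(q+1,6)$-cage, so it has girth $6$; deleting vertices cannot decrease girth, so $g(B_q) \ge 6$, and one immediately exhibits a $6$-cycle (e.g.\ $(0,0) - [0,0] - (1,0) - [1,0] - (0,0)\dots$ with a suitable third point-line pair) to conclude $g(B_q) = 6$. Vertex transitivity is the one point requiring slightly more care: the group of affine translations $(x,y) \mapsto (x, y+a)$, $[m,b] \mapsto [m, b+a]$ acts on $B_q$ and, combined with the change of origin on the first coordinate induced by the multiplicative action $(x,y)\mapsto(cx,y)$, $[m,b]\mapsto[c^{-1}m,b]$, gives transitivity on each of $V_1$ and $V_2$; transitivity between the two parts follows from the natural point-line duality of $PG(2,q)$ that fixes the chosen flag $(p,l)$ (equivalently, sends affine $(x,y)$ to the line $[y,-x]$ up to normalization), which is an automorphism of ${\cal C}_q$ exchanging $V_1$ and $V_2$.

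For (iii), the incidence $(i,y) \,|\, [j,b]$ is equivalent to $y = ji + b$, i.e.\ $b = y - ji$; fixing $i$ and $j$, each $y \in GF(q)$ determines a unique $b$, so edges between $P_i$ and $L_j$ form a perfect matching. For (iv), specialize: if $p=(0,y)$ then $y = m\cdot 0 + b = b$ forces $b = y$, so $N(p) = \{[m,y] : m \in GF(q)\}$, one neighbour in each $L_m$; dually for $l=[0,b]$, the equation $y = 0\cdot x + b$ forces $y=b$, so $N(l) = \{(x,b) : x \in GF(q)\}$. This is the straightness of the matchings from $P_0$ and $L_0$. For (v), the matching between $P_i$ and $L_j$ (with $i,j \ne 0$ or otherwise not the $0$-indexed classes) is given by $(i,y) \mapsto [j, y-ji]$, which is the algebraic twist described in the statement.

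I expect the only genuinely non-routine point to be vertex-transitivity, since the other four items are pure bookkeeping from the incidence equation $y = mx + b$. The mild subtlety there is that the partition classes $P_0$ and $L_0$ are distinguished (matchings out of them are straight rather than twisted), so one must be careful to exhibit explicit automorphisms that move $P_0$ to an arbitrary $P_i$ and swap $V_1$ with $V_2$; once the affine group and the duality of $PG(2,q)$ fixing the flag $(p,l)$ are written down, transitivity on $V(B_q)$ follows.
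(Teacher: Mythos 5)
The paper does not actually prove this proposition: it declares the properties ``well known'' and points the reader to \cite{ALB:10,B08}, so there is no in-paper argument to compare yours against. Your derivation from the affine coordinatization and the incidence equation $y=mx+b$ is the standard one, and it correctly establishes (ii)--(v) together with the order, regularity, bipartiteness and girth claims of (i). Two small slips there: your sample hexagon is not a closed walk, since $(1,0)\nmid [1,0]$; take instead a genuine triangle such as $(0,0),(1,0),(a,1)$ with $a\neq 0,1$ and the three non-vertical lines they span, which exists for every $q\ge 3$ (for $q=2$ the girth is in fact $8$, an edge case irrelevant to the paper).

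The one step that does not work as written is vertex transitivity. The maps you list --- the $y$-translations, the scalings $(x,y)\mapsto(cx,y)$, $[m,b]\mapsto[c^{-1}m,b]$, and a duality fixing the flag $(p,l)$ --- generate a group under which the orbit of a vertex of $P_0$ is only $P_0\cup L_0$: the scaling fixes $P_0$ setwise because $c\cdot 0=0$, the $y$-translations preserve every block, and the duality carries $P_0$ to $L_0$. ``Change of origin on the first coordinate'' is indeed what is needed, but that is the \emph{additive} map $(x,y)\mapsto(x+d,y)$, $[m,b]\mapsto[m,b-md]$ (incidence is preserved since $y=m(x+d)+(b-md)$ iff $y=mx+b$), not the multiplicative one you wrote down. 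Adding this translation moves $P_0$ to an arbitrary $P_d$, which combined with the $y$-translations gives transitivity on $V_1$, and the duality then extends it to all of $V(B_q)$. With that correction, and with the duality written correctly (e.g.\ $(x,y)\mapsto[x,-y]$, $[m,b]\mapsto(m,-b)$, rather than $[y,-x]$), your proof is complete.
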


%{\bf Decide if it is better to write the properties of $B_q$ in a remark as the previous
%one or in a paragraph as below.}
%
%The graph $B_q$ is $q$--regular, bipartite, of order $2q^2$, girth $6$ and vertex transitive.
%The partition of the points and lines of ${\cal C}_q$ applies also
%to the vertex set of $B_q$, i.e. the sets $V_1$ and $V_2$ partition into blocks
%$V_1 = \displaystyle \bigcup^{q-1}_{i=0} P_i$ and
%$V_2 = \displaystyle \bigcup^{q-1}_{j=0} L_j$.
%Moreover, each block $P_i$ is connected to each block $L_j$ by a perfect matching,
%for $i,j \in GF(q)$.
%This means that if we contract each block $P_i$ and $L_j$ in $B_q$ to a single vertex,
%we obtain a complete bipartite graph.
%Furthermore, each vertex in $P_0$ and $L_0$ is connected {\em straight} to all its
%neighbours in $B_q$, meaning that for $p=(0,y)$, $N(p)=\{[i,y] | i \in GF(q) \}$
%and analogously for $l=[0,b]$, $N(l)=\{(j,b) | j \in GF(q) \}$. The other
%matchings between $P_i$ and $L_i$ are {\em twisted} and the rule can be defined algebraically
%through products and sums in $GF(q)$.

For further information regarding these properties and for constructions of the adjacency matrix
of $B_q$ as a block $(0,1)$--matrix please refer to \cite{ALB:10, B08}.
%\textcolor{red}{Note that in the general
%case, when $q$ is a prime power, the algebraic rules defining the matchings between $P_i$ and $L_i$
%use products and differences in $GF(q)$.}

\section{Reductions}\label{reduc}
In this section we will describe two reduction operations that we perform on the graph $B_q$.

%In this section we will describe the two types of reductions that we perform on the graph $B_q$.

\

\noindent{\sc Reduction 1} Remove vertices from $P_0$ and $L_0$.

Let $T \subseteq S \subseteq GF(q)$, $S_0 = \{(0,y) | y \in S\}  \subseteq P_0$,
$T_0 = \{[0,b] | b \in T\}  \subseteq L_0$ and $B_q(S,T)=B_q-S_0-T_0$.

%Since the first coordinate of the vertices in $P_0$ and $L_0$ is always $0$, to simplify the notation, we will write
%$S$ and $T$ as subsets of $GF(q)$, but meaning the points of $P_0$ and $L_0$
%with second coordinate in $S$ and $T$ respectively.

\begin{lemma}\label{Red1}
Let $T \subseteq S \subseteq GF(q)$. Then $B_q(S,T)$ is bi--regular with degrees $(q-1,q)$ of order $2q^2-|S|-|T|$.
Moreover, the vertices $(i,t) \in V_1$ and $[j,s] \in V_2$, for each $i,j \in GF(q) -\{0\}$,
$s \in S$ and $t \in T$ are the only vertices of degree $q-1$ in $B_q(S,T)$, together with
$[0,s] \in V_2$ for $s \in S-T$ if $T \subsetneq S$.
\end{lemma}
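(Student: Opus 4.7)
The plan is to combine an immediate vertex count with a case analysis of how each surviving vertex's degree changes under the two deletions, guided throughout by Proposition~\ref{BqProp}. Since $V_1$ and $V_2$ are disjoint, the removed sets $S_0\subseteq P_0\subseteq V_1$ and $T_0\subseteq L_0\subseteq V_2$ are disjoint and of sizes $|S|$ and $|T|$, so by Proposition~\ref{BqProp}(i) the order of $B_q(S,T)$ is $2q^2-|S|-|T|$.

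To handle the degrees I will first record the two structural facts I need. By Proposition~\ref{BqProp}(iii), every vertex of $P_i$ has exactly one neighbour in each block $L_j$, and dually for $L_j$. By Proposition~\ref{BqProp}(iv) the matchings involving $P_0$ and $L_0$ are \emph{straight}, so for $i\neq 0$ the unique neighbour of $(i,y)\in P_i$ in $L_0$ is $[0,y]$, and for $j\neq 0$ the unique neighbour of $[j,b]\in L_j$ in $P_0$ is $(0,b)$. Consequently, the degree of any surviving vertex drops by exactly the number of its original neighbours lying in $S_0\cup T_0$.

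Now I split the surviving vertices into four groups. For $(i,y)$ with $i\neq 0$, its only possibly-deleted neighbour is $[0,y]$, removed iff $y\in T$, giving degree $q-1$ precisely when $y\in T$ and $q$ otherwise. Symmetrically, for $[j,b]$ with $j\neq 0$, the only possibly-deleted neighbour is $(0,b)$, removed iff $b\in S$, giving degree $q-1$ precisely when $b\in S$. For a surviving $(0,y)\in P_0$ (so $y\notin S$), all its neighbours $[i,y]$ with $i\neq 0$ are untouched, and the $L_0$-neighbour $[0,y]$ is deleted only if $y\in T$; but $T\subseteq S$ and $y\notin S$ force $y\notin T$, so the degree is $q$. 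Finally, for a surviving $[0,b]\in L_0$ (so $b\notin T$), its only possibly-deleted neighbour is $(0,b)$, which is removed iff $b\in S$; this yields degree $q-1$ exactly for $b\in S-T$, and $q$ for $b\notin S$.

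Collecting the four cases shows that every remaining vertex has degree $q$ or $q-1$, proving $(q-1,q)$--bi--regularity, and gives exactly the stated list of degree $(q-1)$ vertices, with the set $\{[0,s]:s\in S-T\}$ being non--empty precisely when $T\subsetneq S$. The argument is essentially a careful bookkeeping exercise; the only substantive point worth flagging is that the hypothesis $T\subseteq S$ is used in exactly one place, namely to prevent the surviving vertices of $P_0$ from losing a neighbour in $L_0$ and thereby sneaking into the degree-$(q-1)$ list. Without this hypothesis the characterisation of the low-degree vertices would have to be amended accordingly.
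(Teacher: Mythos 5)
Your proof is correct and takes essentially the same route as the paper, whose entire proof is the one-line assertion that the lemma is an immediate consequence of Proposition~\ref{BqProp}; you have simply written out the degree bookkeeping that the authors leave implicit, and your case analysis (including the role of $T\subseteq S$ in keeping surviving $P_0$-vertices at degree $q$ and the identification of $[0,s]$ for $s\in S-T$) matches the statement exactly. The only cosmetic difference is that you invoke parts (iii) and (iv) of Proposition~\ref{BqProp}, arguably the more pertinent clauses, where the paper cites (i) and (v).
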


\begin{proof}
It is an immediate consequence of
%the properties of $B_q$, the matchings between $P_0$ and $L_j$
%and the matchings between $L_0$ and $P_i$, for $i,j \in GF(q)$, c.f.
Remark \ref{BqProp} $(i)$, $(v)$.
\end{proof}

\noindent{\sc Reduction 2} Remove pairs of blocks $(P_i,L_i)$ from $B_q$.

Let $u \in \{1, \ldots, q-1 \}$. Define $B_q(u)=B_q-\bigcup^u_{i=1} (P_{q-i} \cup L_{q-i})$
the graph obtained from $B_q$ by deleting the last $u$ pairs of blocks of vertices $P_i, L_i$.
and $B_q(S,T,u)=B_q-S_0-T_0-\bigcup^u_{i=1} (P_{q-i} \cup L_{q-i})$.

\begin{lemma}\label{Red2}
Let $u \in \{0, \ldots, q-1 \}$. Then, the graph $B_q(u)$ is $(q-u)$--regular of order $2(q^2-qu)$
and the graph $B_q(S,T,u)$ is bi--regular with degrees $(q-u-1,q-u)$ and order $2(q^2-qu)-|S|-|T|$.
Moreover, the vertices $(i,t) \in V_1$ and $[j,s] \in V_2$, for each $i,j \in GF(q)$,
$s \in S$ and $t \in T$ are the only vertices of degree $q-u-1$ in $B_q(S,T,u)$,
together with $[0,s] \in V_2$ for $s \in S-T$ if $T \subsetneq S$.
\end{lemma}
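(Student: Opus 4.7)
The plan is to decompose the argument into two essentially independent accountings: one for deleting the $u$ pairs of blocks $(P_{q-i},L_{q-i})$, producing $B_q(u)$, and one for then deleting $S_0$ and $T_0$ from $B_q(u)$ to obtain $B_q(S,T,u)$. The first step relies on Proposition \ref{BqProp}, while the second simply repeats inside $B_q(u)$ the argument already used for Lemma \ref{Red1}.

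For $B_q(u)$, Proposition \ref{BqProp}(i)--(ii) give $|V(B_q)|=2q^2$ and show that each of the $2q$ blocks has exactly $q$ vertices, so removing $u$ pairs of blocks subtracts $2qu$ vertices, leaving $2(q^2-qu)$. For the regularity, any surviving point $(i,t)$ has, by Proposition \ref{BqProp}(iii), exactly one neighbor in each of the $q$ line-blocks $L_0,\ldots,L_{q-1}$ of $B_q$; deleting the $u$ blocks $L_{q-1},\ldots,L_{q-u}$ therefore removes exactly $u$ of its neighbors, leaving degree $q-u$. The symmetric statement for line-vertices yields $(q-u)$-regularity of $B_q(u)$.

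For $B_q(S,T,u)$, since $S_0\subseteq P_0$ and $T_0\subseteq L_0$ are disjoint from the already-deleted blocks, the order is $2(q^2-qu)-|S|-|T|$. The degree analysis rests on Proposition \ref{BqProp}(iv): the unique neighbor of any $(i,t)$ lying in $L_0$ is $[0,t]$, and the unique neighbor of any $[j,s]$ lying in $P_0$ is $(0,s)$. Consequently, for a surviving point $(i,t)$ with $i\ne 0$, the only neighbor that can be further deleted in passing from $B_q(u)$ to $B_q(S,T,u)$ is $[0,t]$, so the degree drops to $q-u-1$ precisely when $t\in T$; symmetrically, $[j,s]$ with $j\ne 0$ has degree $q-u-1$ iff $s\in S$. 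For a surviving point $(0,y)$ (which forces $y\notin S$), the neighbor $[0,y]$ would be absent only if $y\in T$, which is impossible under $T\subseteq S$, so its degree stays at $q-u$. Finally, for $[0,b]$ with $b\notin T$, the neighbor $(0,b)$ is deleted iff $b\in S$, yielding degree $q-u-1$ exactly when $b\in S-T$. The only real subtlety is this use of the hypothesis $T\subseteq S$, which both rules out spurious degree drops among the surviving $P_0$-points and isolates the exceptional family $[0,s]$, $s\in S-T$; all remaining steps are routine counting.
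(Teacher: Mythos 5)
Your proof is correct and follows essentially the same route as the paper, which simply declares the lemma immediate from Proposition \ref{BqProp} and Lemma \ref{Red1}; you have just written out the block-deletion count via the perfect matchings and then repeated the Lemma \ref{Red1} degree analysis inside $B_q(u)$. Nothing further is needed.
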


\begin{proof}
It is immediate from
%It is an immediate consequence of
%the properties of $B_q$ and the matchings among
%$P_i$ and $L_j$ for $i,j \in GF(q)$, c.f.
Remark \ref{BqProp} $(i)$, $(iv)$ and Lemma \ref{Red1}. \end{proof}

Note that,  $B_q(u)=B_q$ and $B_q(S,T,u)=B_q(S,T)$ when $u=0$.

\section{Amalgams}\label{amal}
In this section we will describe amalgam operations that can be performed on the reduced graph $B_q(S,T,u)$ or on $B_q$ itself.

Let $\Gamma_1$ and $\Gamma_2$ be two graphs of the same order and with the same label on their vertices.
In general, an {\em amalgam of $\Gamma_1$ into $\Gamma_2$} is a graph obtained adding all the edges of $\Gamma_1$ to $\Gamma_2$.

\

Let $P_i$ and $L_i$ be defined as in Section \ref{prel}.
Consider the graph $B_q(S,T,u)$,  for some $T \subseteq S \subseteq GF(q)$ and $u \in \{0, \ldots, q-1 \}$.
Let $S_0 \subseteq P_0$, $T_0 \subseteq L_0$ as in Reduction $1$, and let $P_0':=P_0-S_0$ and $L_0':=L_0-T_0$ be
the blocks in $B_q(S,T,u)$ of order $q-|S|$ and $q-|T|$, respectively.

%{\color{red} aqu\'i yo pondr\'ia $u \in \{0, \ldots, q-1 \}$, es decir desde u=0 no desde 1, quiz\'a hay algo que no entiendo}.

Let $H_1$, $H_2$, $G_i$, for $i=1,2$, be graphs of girth at least $5$ and order $q-|S|$, $q-|T|$ and $q$, respectively.
Let $H_1$ be a $k$--regular graph. If $|S|=|T|$, let $H_2$ be $k$--regular and otherwise let it be $(k,k+1)$--regular,
with $|S-T|$ vertices of degree $k+1$.  If $T = \emptyset$, let $G_1$ be a $k$--regular graph and otherwise let it be
$(k,k+1)$--regular with $|T|$ vertices of degree $k+1$.
Finally, let $G_2$ be a $(k,k+1)$--regular with $|S|$ vertices of degree $k+1$.

We define $B^*_q(S,T,u)$ to be the {\em amalgam} of $H_1$ into $P_0'$,
$H_2$ into $L_0'$, $G_1$ into $P_i$ and $G_2$ into $L_i$, for $i \in \{1, \ldots, q-u-1\}$
and $u \in \{0, \ldots, q-2\}$. We also define $B^*_q(S,T,q-1)$ to be the amalgam of $H_1$ into $P_0'$,
$H_2$ into $L_0'$.

\

%\noindent{\sc Notation:}

%{\bf aggiustare labelling e notazione una volta per tutte}
%
To simplify notation in our results, we label $P_i$ and $L_i$ as in Section \ref{prel},
but assume that the labellings of $H_1,H_2,G_1$ and $G_2$, correspond
to the second coordinates of $P_0', L_0', P_i$ and $L_i$ respectively for $i \in \{1, \ldots, q-u-1\}$
and $u \in \{0, \ldots, q-2\}$.
Suppose also that the vertices of degree $k+1$, if any, in $H_2,G_1$ and $G_2$ are labelled
in correspondence with the second coordinates of $S-T$, $T$ and $S$, respectively.

%Suppose also that the degrees of $G_i$ are $k$ and $k+1$, for $i=1,2$, and that
%the vertices of degree $k+1$ in $G_1$ are those with label in $T$ and the vertices of degree $k+1$ in $G_2$
%are those with label in $S$. Note that if $T = \emptyset$ then $G_1$ is a $k$--regular graph.
%
%{\bf fine labelling}

With such a labelling, let $ab$ be an edge in $H_1,H_2,G_1$ or $G_2$, and define the {\em weight}
or the {\em Cayley Color} of $ab$ to be $\pm (b-a) \in {\mathbb Z}^*_q$. Let ${\cal P}_{\omega}$ be the
set of weights in $H_1$ and $G_1$, and let ${\cal L}_{\omega}$ be the set of weights in $H_2$ and $G_2$.

\

The following result is a special case of \cite[Theorem 2.8]{F10} for the coordinates we have chosen
for ${\cal C}_q$ (cf. Section \ref{prel}). On the other hand, it generalizes such a Theorem since we delete vertices from
$P_0$ and $L_0$, pairs of blocks $P_i, L_i$ and amalgam with graphs which are not regular,
but chosen in such a way that the obtained amalgam is regular.

\begin{theorem}\label{AmalgamThm} Let $T \subseteq S \subseteq GF(q)$, $u \in \{0, \ldots, q-1 \}$.
Let $H_1,H_2,G_1$ and $G_2$ be defined as above and suppose that the weights
${\cal P}_{\omega} \cap {\cal L}_{\omega} = \emptyset$.
Then the amalgam $B^*_q(S,T,u)$ is a $(q+k-u)$--regular graph of girth at least $5$ and order $2(q-u)-|S|-|T|$.
\end{theorem}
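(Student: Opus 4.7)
The plan is to verify the three conclusions (order, regularity, girth) in sequence, drawing on Lemma \ref{Red2} for the first two and reserving the real argument for the girth.

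For the order, since the amalgam only adjoins edges without altering the vertex set, the number of vertices in $B^*_q(S,T,u)$ equals that of $B_q(S,T,u)$, namely $2(q^2-qu)-|S|-|T|$ by Lemma \ref{Red2}. For regularity, I would classify every vertex by the block it lies in and tally its base degree from Lemma \ref{Red2} plus the contribution of the amalgam. Vertices of type $(0,y)$ with $y\notin S$, $(i,y)$ with $i\ge 1,\ y\notin T$, $[0,b]$ with $b\notin S$, and $[j,b]$ with $j\ge 1,\ b\notin S$ have base degree $q-u$ and gain $k$ from the regular/low-degree side of $H_1,G_1,H_2,G_2$; vertices of type $(i,t)$ with $i\ge 1,\ t\in T$, $[0,s]$ with $s\in S-T$, and $[j,s]$ with $j\ge 1,\ s\in S$ have base degree $q-u-1$ and, by the chosen labelling of the high-degree vertices of $G_1,H_2,G_2$, gain $k+1$. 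In every case the final degree is $q-u+k$.

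For girth, I would use that $B_q(S,T,u)\subseteq B_q$ is bipartite of girth $\ge 6$, with parts $V_1=\bigcup P_i$ and $V_2=\bigcup L_j$, so any triangle or quadrilateral in $B^*_q(S,T,u)$ must involve amalgam edges, and each amalgam edge is confined to a single block $P_i$ or $L_j$. A triangle with all vertices in one part is forced into one block, contradicting the girth-$\ge 5$ hypothesis on $H_1,H_2,G_1,G_2$; a triangle with two vertices in one part and one in the other produces a single vertex with two neighbours in a common block via $B_q$-edges, contradicting the perfect matching between blocks (Proposition \ref{BqProp}(iii)). The analogous 4-cycle cases (all four in one part, or three-and-one) are ruled out by the same two arguments, and the fully alternating 2-2 pattern is ruled out because then all four edges are $B_q$-edges, impossible as $B_q$ has girth $6$.

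The one remaining 4-cycle configuration is the crux: two consecutive vertices $a,b$ in some $P_i$ joined by an edge of $H_1$ or $G_1$, two consecutive vertices $c,d$ in some $L_j$ joined by an edge of $H_2$ or $G_2$, and $B_q$-edges $bc$ and $da$. Writing $a=(i,y_1),\ b=(i,y_2),\ c=[j,b_1],\ d=[j,b_2]$, the incidence relations $y_2=ji+b_1$ and $y_1=ji+b_2$ give $y_2-y_1=-(b_2-b_1)$, so the Cayley colour $\pm(y_2-y_1)\in\mathcal{P}_\omega$ of $ab$ equals the Cayley colour $\pm(b_2-b_1)\in\mathcal{L}_\omega$ of $cd$, contradicting $\mathcal{P}_\omega\cap\mathcal{L}_\omega=\emptyset$. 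This is the main obstacle of the proof: once this case is correctly isolated from all other 4-cycle topologies, the weight-disjointness hypothesis disposes of it, and everything else is routine bookkeeping from Lemma \ref{Red2} and the girth assumption on $H_1,H_2,G_1,G_2$.
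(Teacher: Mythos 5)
Your proposal is correct and follows essentially the same route as the paper's own proof: order and regularity are read off from Lemma \ref{Red2} together with the prescribed degree distribution of $H_1,H_2,G_1,G_2$, and a short cycle is excluded by locating its vertices among the blocks $P_i$, $L_j$ until only the ``two-plus-two'' quadrilateral survives, which is then killed by the incidence relations $y=mx+b$ forcing a common Cayley colour in ${\cal P}_{\omega}\cap{\cal L}_{\omega}$. Your case analysis of the $4$--cycle configurations is somewhat more explicit than the paper's, and the order you compute, $2(q^2-qu)-|S|-|T|$, is the correct one (the value $2(q-u)-|S|-|T|$ in the theorem statement is a typo, as the applications with order $2q(q-u)-2$ confirm).
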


\begin{proof}
The order and the regularity of $B^*_q(S,T,u)$ follow from Lemma \ref{Red2} and the choice of $H_1,H_2,G_1$ and $G_2$.
Note that the vertices of $L_i$, with degree $q-u-1$ in $B_q(S,T,u)$, have degree $k+1$ in $G_2$,
which add up to to degree $q+k-u$ in $B^*_q(S,T,u)$, for $i \in \{1, \ldots, q-u-1\}$.
Similarly for the vertices in $L_0$ and for those in $P_i$, for $i \in \{1, \ldots, q-u-1\}$.

%{\color{red} Aqu\'i no habria que decir que tambi\'en los v\'ertices de grado $q-u-1$ en $L_0$, o quiz\'a ponerlo al variar la $i$ $i \in \{0, \ldots, q-u-1\}$ y no ponerlo en $P_0$, ya que $H_1$ es siempre regular de grado $k$ y todos los v\'ertices en $P_0$ tienen grado $q-u$.}

Let $C$ be the shortest circuit in $B^*_q(S,T,u)$ and suppose, by contradiction, that $|C| \leq 4$.
Therefore, $C=(xyz)$ or $C=(wxyz)$.
Since $B_q$ has girth $6$ and $H_1,H_2,G_1,G_2$ have girth at least $5$, then $C$ cannot be completely contained in $B_q$ or
in some $H_i$ or $G_i$ for $i=1,2$.
Then, w.l.o.g. the path $xyz$ in $C$ is such that $x,y \in P_i$ and $z \in L_m$ for some $i,m \in GF(q)$.
Since the edges between $P_i$ and $L_m$ form a matching, then $xz \notin E(B_q)$ and hence $xz \notin E(B_q^*(S,T,u))$.
Thus $|C| > 3$ and we can assume $|C|=4$ and $C=(wxyz)$.

If $w \in P_i$, by the same argument, $wz \notin E(B_q^*(S,T,u))$ and we have a contradiction.
There are no edges between $P_i$ and $P_j$ in $B^*_q(S,T,u)$, so $w \notin P_j$ for $j \in GF(q) - \{i\}$,
which implies that $w \in L_n$ for some $n \in GF(q)$. If $n \ne m$, we have a contradiction since there
are no edges between $L_m$ and $L_n$ in $B^*_q(S,T,u)$. Therefore $x,y \in P_i$ and $w,z \in L_m$.
Let $x=(i,a), y=(i,b), z=[m,c]$ and $w=[m,d]$ as in the labelling chosen in Section \ref{prel}.
Then $wx,yz \in E(B^*_q(S,T,u))$ imply that $a=m \cdot i + d$ and $b=m \cdot i + c$, respectively,
which give $b - a = c - d$. On the other hand $xy, wz \in E(B^*_q(S,T,u))$ implies that $ab \in E(H_1) \cup E(G_1)$ and $cd \in E(H_2) \cup E(G_2)$,
so $\pm (a-b) \in {\cal P}_{\omega}$ and $\pm (c-d) \in {\cal L}_{\omega}$, a contradiction, since by hypothesis
${\cal P}_{\omega} \cap {\cal L}_{\omega} = \emptyset$.
\end{proof}

%%Girth exactly $5$
\begin{remark}\label{sharp5} In most cases the graph $B^*_q(S,T,u)$ has girth exactly $5$. We describe two cases that we will use in Sections \ref{newgirth5} and \ref{smallcases}.

(i) If some $H_i$ or $G_i$ contains a $5$--circuit, for $i \in \{1,2\}$, then so does $B^*_q(S,T,u)$.

(ii) Let $t \in GF(q)$ be the smallest weight of an edge in some $H_i$, say w.l.o.g. in $H_1$.
%choose $t \in GF(q)$ in such a way that $t$ is the smallest weight of an edge in some $H_i$, say w.l.o.g. in $H_1$.
If $t=1$ and $u < q-1$ then $((0,i),(0,j),[1,j],(1,i),[0,i])$ is a $5$--circuit in $B^*_q(S,T,u)$.
If $t > 1$ then $((0,i),(0,j),[1,j],(t,i),[0,i])$ is a $5$--circuit in $B^*_q(S,T,u)$ as long as the
$t^{th}$--pair of blocks from $B^*_q(S,T)$ is not deleted, i.e. $u < q - t$.

%{\color{red} yo si pondr\'ia esto: $((0,i),(0,j),[1,j],(t,i),[0,i])$ is a $5$--circuit in $B^*_q(S,T,u)$ as long as the $t^{th}$--pair of
%blocks from $B^*_q(S,T)$ is not deleted, i.e. $u < q - t$.}
\end{remark}

All the graphs constructed in the next sections have girth exactly $5$ since either
some $H_i$ or $G_i$ contains a $5$--circuit, for $i \in \{1,2\}$, or  $1 \in P_\omega$.

%{\bf Check the upper bound for $u$ in such a way that we still have a $5$--cycle in the amalgamated graph.
%Although in all our examples $m=1$ so we never have this problem -- it's just to make the theorem as general as possible.}
%\end{proof}

%\newpage

\section{New Regular Graphs of Girth $5$}\label{newgirth5}

In this section we will construct new $(q+3)$--regular graphs of girth $5$,
% for $q=11,13,17,$ $19,37$ sporadically, and in general
for any prime $q \ge 23$, applying reductions and amalgams to the graph $B_q$. In each case we will specify the sets $S$ and $T$
of vertices to be deleted from $P_0$ and $L_0$ and the graphs $H_1,H_2,G_1,G_2$ to be used for the amalgam into
$B^*_q(S,T,u)$. For $u=0$, all the graphs $B^*_q(S,T,u)$ constructed in this section have two vertices less than the ones
that appear in $\cite{J05,F10}$.

Recall that every prime $q$ is either congruent to $1$ or $5$ modulo $6$. We will now treat these two cases separately,
when $q=6n+1$ or $q=6n+5$ is a prime.
%Furthermore, we will specify which of these graphs are smaller $(k,5)$--graphs than those
%previously known.

\subsection {Construction for primes $q=6n+1$}
%\subsection {Construction when $p=6n+1$ is a prime for a positive integer $n \ge 5$.}

Throughout this subsection we will consider  $n \ge 5$. The smaller cases will be treated in Section \ref{smallcases}.
Let $H_1$ and $H_2$ be two graphs of order $q-1$ with the vertices labeled from $1$ through $6n$,
and partitioned into $W_1=\{1,2,..,3n\}$ and $W_2=\{3n+1,..,6n\}$.

Define the set of edges $E(H_1) = A_1 \cup B_1 \cup C_1$ as follows:

\

\resizebox{11cm}{!}{
\begin{minipage}{13cm}
\begin{tabular}{cll}
Set & \quad \quad \quad \quad \quad \quad Edges & \quad \quad Description \\
\hline $A_1$ & $\{(i,i+1) | i = 1, \ldots ,3n-1 \} \cup \{ (3n,1)\}$ & $(3n)$--circuit with weights \\
                                                                 & & $1$ and $3n-1$ \\
\hline $B_1$ & $\{(i,i+2) | i = 3n+1, \ldots ,6n-2 \} $ & one or two circuits \\
             & $\cup \{ (6n-1,3n+1), (6n,3n+2)\}$     & according to the parity of $n$, \\
             &                                          & with weights $2$ and $3n-2$\\
\hline $C_1$ & $\{(i,3n+i) | i = 1,\ldots ,3n \}$       & Prismatic edges between $W_1$ and $W_2$ \\
                                                      & & of weight $3n$

\end{tabular}
\end{minipage}
}

\

The graph $H_1$ is cubic and has weights $\pm \{1,2,3n-2,3n-1,3n\}$.

\begin{lemma}\label{H1g5}
The graph $H_1$ has girth $5$.
\end{lemma}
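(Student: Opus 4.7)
The plan is to bound the girth from below by a case analysis on the three distinct edge classes, and then exhibit an explicit $5$--cycle to show the bound is tight.

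First I would observe that $H_1$ has a natural tripartition of edges: $A_1$ lives inside $W_1$, $B_1$ lives inside $W_2$, and $C_1$ is a perfect matching between $W_1$ and $W_2$. Since $C_1$ is a matching, every cycle of $H_1$ uses an even number of $C_1$--edges, and any two $C_1$--edges appearing in the same cycle must be non--adjacent.

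\smallskip

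For the lower bound (no triangles and no $4$--cycles), I would argue as follows. A triangle would need either (a) all three edges in $A_1$, (b) all three edges in $B_1$, or (c) two $C_1$--edges and one other. Case (a) is impossible because $A_1$ is a single $(3n)$--circuit with $3n\ge 15$; case (b) is impossible because, depending on the parity of $n$, $B_1$ is either a $(3n)$--circuit or two $(3n/2)$--circuits, all of length at least $7$ when $n\ge 5$; case (c) forces the two matching edges to share an endpoint, contradicting that $C_1$ is a matching. A $4$--cycle would use $0$, $2$ or $4$ edges of $C_1$. Four is impossible (matching), zero reduces to the preceding lengths of $A_1$ and $B_1$. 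For two $C_1$--edges, the cycle must have the form $x,\, 3n+x,\, 3n+y,\, y,\, x$ with $xy\in A_1$ and $(3n+x)(3n+y)\in B_1$. An edge of $A_1$ forces $|y-x|\in\{1,3n-1\}$ while an edge of $B_1$ forces $|y-x|\in\{2,3n-2\}$; these two sets are disjoint for $n\ge 5$, giving the desired contradiction.

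\smallskip

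For the upper bound I would display an explicit $5$--circuit. The cycle
\[
\bigl(1,\, 2,\, 3,\, 3n+3,\, 3n+1,\, 1\bigr),
\]
uses $1{-}2,\,2{-}3\in A_1$, then the $C_1$ prismatic edge $3{-}(3n+3)$, then the $B_1$ step--$2$ edge $(3n+3){-}(3n+1)$, and closes with the $C_1$ edge $(3n+1){-}1$. This certifies that the girth is at most $5$ and hence exactly $5$.

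\smallskip

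The main (only real) obstacle is the disjointness check of weight sets in the $4$--cycle case: one must be careful that the wrap--around edges of $A_1$ and $B_1$, which have weights $3n-1$ and $3n-2$ respectively, do not accidentally coincide with each other or with the primary weights $1$ and $2$. The hypothesis $n\ge 5$ (equivalently the earlier restriction on small cases, treated in Section \ref{smallcases}) is exactly what makes this separation clean; everything else is routine book--keeping on the three edge classes.
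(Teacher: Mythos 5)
Your argument is correct, and it reaches the same conclusion by a genuinely different mechanism than the paper's. The paper also first disposes of circuits contained in $H_1[W_1]$ or $H_1[W_2]$ using the lengths of the circuits formed by $A_1$ and $B_1$, but for circuits crossing between $W_1$ and $W_2$ it proceeds by direct enumeration: it lists the possible paths $xyz$ with $x,y$ on one side and $z$ on the other (four subcases per configuration, coming from the two circuit--neighbours and the one prismatic neighbour of each vertex) and verifies in each that $z\notin N_{H_1}(x)$ and $N_{H_1}(x)\cap N_{H_1}(z)=\{y\}$. You instead exploit that $C_1$ is a perfect matching realizing the cut $(W_1,W_2)$, so any cycle uses an even number of pairwise non--adjacent $C_1$--edges; this collapses everything to the single case of a $4$--cycle with two opposite prismatic edges, which you eliminate because the $A_1$--differences $\pm\{1,3n-1\}$ and the $B_1$--differences $\pm\{2,3n-2\}$ are disjoint for $n\ge 2$. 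This is precisely the weight--disjointness mechanism the paper deploys globally in Theorem \ref{AmalgamThm}, applied internally to the prism structure of $H_1$; it is cleaner, transfers without new case analysis to $H_2$ and to the $q=6n+5$ family, and makes explicit why small $n$ must be excluded. Both proofs close with the same $5$--circuit $(1,2,3,3n+3,3n+1)$. The only imprecision on your side is harmless: the circuits of $B_1$ have length $3n$ or $3n/2$, hence at least $9$ for the relevant $n\ge 5$, not merely at least $7$.
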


\begin{proof}
Let $C$ be the shortest circuit in $H_1$. If $C$ is a subgraph of either $H_1[W_1]$ or $H_1[W_2]$ then $|C| \ge 5$,
since $H_1[W_1]$ has girth at least $15$ and $H_1[W_2]$ has girth at least $9$. Otherwise, there is a path
$xyz$ in $C$ is such that either $x,y \in W_1$ and $z \in W_2$ or $x \in W_1$ and $y,z \in W_2$.
The first case has the following subcases:
\begin{enumerate}[(i)] \setlength{\itemsep}{-2mm}
\item $x=1$, $y=3n$, $z=6n$
\item $x=i$, $y=i-1$, $z=3n+i-1$, for $i=2, \ldots 3n$
\item $x=i$, $y=i+1$, $z=3n+i+1$, for $i=1, \ldots 3n-1$
\item $x=3n$, $y=1$, $z=3n+1$
\end{enumerate}
The second case has similar subcases. If we show that
$z \notin N_{H_1}(x)$ then $|C| \ne 3$,
and if $y = N_{H_1}(x) \cap N_{H_1}(z)$ then $|C| \ne 4$.
In subcase (i) the neighbourhoods of $x$ and $z$ in $H_1$ are $N_{H_1}(x)=\{2,3n,3n+1\}$
and $N_{H_1}(z)=\{3n,3n+2,6n-2\}$, respectively.
Thus, $z \notin N_{H_1}(x)$ and $y = N_{H_1}(x) \cap N_{H_1}(z)$.
Hence, $|C| \ge 5$. All the other cases are analogous.
The circuit $(1,2,3,3n+3,3n+1)$ is a $5$--circuit in $H_1$.

%{\color{red} creo que el que pusieron no es un 5-ciclo, $3$ y $3n+2$ no son adyacentes, no?, pero $(1,2,3,3n+3,3n+1)$ si lo es, no?, si estoy equivocada diganmelo por favor}.
\end{proof}

Define the set of edges $E(H_2) = A_2 \cup B_2 \cup C_2$ as follows:

\

\resizebox{11cm}{!}{
\begin{minipage}{13cm}
\begin{tabular}{cll}
Set & \quad \quad \quad \quad \quad \quad Edges & \quad \quad Description \\
\hline $A_2$ & $\{(i,i+3) | i = 1,\ldots,3n-3 \}$ & Three $n$--circuit with weights \\
             & $\cup \{ (3n-2,1), (3n-1,2), (3n,3)\}$  & $3$ and $3n-3$ \\
\hline $B_2$ & $\{(i,i+4) | i = 3n+1,\ldots,6n-4 \} $                       & One, two or four circuits \\
             & $\cup \{ (6n-3,3n+1), (6n-2,3n+2), $                         & according to the congruency of $3n$ modulo $4$, \\
             & $(6n-1,3n+3), (6n,3n+4)\}$                                   & with weights $4$ and $3n-4$\\
\hline $C_2$ & $\{(i,3n+4+i) | i = 1,\ldots,3n-4 \}$                          & Prismatic edges between $W_1$ and $W_2$ \\
             & $\cup \{(3n-3, 3n+1), (3n-2, 3n+2), $                        & of weights $4$ and $3n+4 \equiv 3n-3 \, \mod \, q$ \\
             & $(3n-1, 3n+3), (3n, 3n+4) \}$                                &
\end{tabular}
\end{minipage}
}

\

The graph $H_2$ is cubic and has weights $\pm \{3,4,3n-4,3n-3\}$.

\begin{lemma}\label{H2g5}
The graph $H_2$ has girth at least $5$.
\end{lemma}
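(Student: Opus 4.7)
The plan is to follow the template of the proof of Lemma \ref{H1g5}. Let $C$ be a shortest circuit in $H_2$, and split on whether $C$ lies entirely in $H_2[W_1]$, entirely in $H_2[W_2]$, or uses at least one edge of the prismatic matching $C_2$.

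For the two ``pure'' cases I would appeal directly to the circulant structure. The edges of $A_2$ realise $H_2[W_1]$ as the circulant on $3n$ vertices with connection set $\{\pm 3\}$, so since $\gcd(3,3n)=3$ it is a disjoint union of three $n$-cycles and has girth $n\ge 5$. Likewise, the edges of $B_2$ turn $H_2[W_2]$ into the circulant on $3n$ vertices with connection set $\{\pm 4\}$, which splits into $\gcd(4,3n)$ cycles of length $3n/\gcd(4,3n)$. A brief check on $n \bmod 4$ --- the only potentially awkward branch being $4\mid n$, which still yields components of length $3n/4\ge 6$ once $n\ge 8$ --- certifies that this girth is at least $6$.

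In the remaining case $C$ crosses between $W_1$ and $W_2$, and the parity of part changes along a cycle forces an even positive number of $C_2$-edges in $C$. Because $C_2$ is a perfect matching and neither induced piece contains a triangle, any configuration with $|C|=3$ would require two $C_2$-edges at a common endpoint, which is impossible. The matching property similarly rules out $|C|=4$ with all four edges in $C_2$ or with two $C_2$-edges sharing a vertex, leaving only the layout $w,x\in W_1$ and $y,z\in W_2$ with $wx\in A_2$, $yz\in B_2$, and $xy,zw\in C_2$. At this point I would use the key observation that, under the natural identification of $W_1$ and $W_2$ with $\mathbb{Z}_{3n}$, the matching $C_2$ is uniformly the shift $\sigma(i)=i+4\pmod{3n}$ --- a fact one verifies by inspecting the four wrap-around edges $(3n-3,3n+1),\dots,(3n,3n+4)$ against the main pattern $(i,3n+4+i)$. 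Then $y-z=\sigma(x)-\sigma(w)=x-w\equiv\pm 3\pmod{3n}$, while $yz\in B_2$ forces $y-z\equiv\pm 4\pmod{3n}$, so $3n$ would have to divide $1$ or $7$, contradicting $n\ge 5$.

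The step I expect to require slightly more care is the $H_2[W_2]$ girth check, since the component structure depends on $\gcd(4,3n)$ and hence on $n\bmod 4$; one has to rule out short components uniformly across the residue classes $n$ odd, $n\equiv 2\pmod 4$, and $n\equiv 0\pmod 4$. Once that is in hand, the elimination of cross $3$- and $4$-circuits collapses to a matching argument together with a single modular identity, courtesy of the uniform shift description of $C_2$, which bypasses the explicit four-subcase enumeration carried out in Lemma \ref{H1g5}.
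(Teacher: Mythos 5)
Your proof is correct, and it follows the same overall template as the paper's (the paper simply says ``similar to the proof of Lemma \ref{H1g5}''): dispose of circuits contained in $H_2[W_1]$ or $H_2[W_2]$ by the circulant structure of $A_2$ and $B_2$, then eliminate circuits crossing the prismatic matching $C_2$. Where you genuinely diverge is in the crossing case. The paper's model proof (Lemma \ref{H1g5}) enumerates the possible paths $xyz$ straddling the partition and checks neighbourhoods subcase by subcase; you instead observe that $C_2$ is, after identifying both $W_1$ and $W_2$ with $\mathbb{Z}_{3n}$, the uniform shift $i\mapsto i+4$ (the four wrap-around edges are exactly what makes this exact rather than approximate), so a crossing $4$--circuit would force $\pm 3\equiv\pm 4\pmod{3n}$, impossible for $n\ge 5$; $3$--circuits and $4$--circuits with adjacent or four matching edges die immediately by the matching property. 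This buys a uniform, enumeration-free argument that scales cleanly with $n$ and makes transparent why the construction works (the $A_2$-weight and $B_2$-weight differ, and the matching is a translation); the paper's neighbourhood check is more pedestrian but requires no structural observation about $C_2$. Your flagged worry about the $4\mid n$ branch of $H_2[W_2]$ is harmless: $4\mid n$ with $n\ge 5$ forces $n\ge 8$, so the components have length $3n/4\ge 6$.
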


\begin{proof}
Similar to the proof of Lemma \ref{H1g5}.
\end{proof}

\begin{lemma}\label{TchInd}
Let $G$ be a graph of girth at least $5$. Let $x_1x_2, x_3x_4 \in E(G)$
be two independent edges of $G$ such that $N(x_i) \cap N(x_j) = \emptyset$,
for all $i,j \in \{1,2,3,4\}$, $i \ne j$.
%Let $G'$ be the graph of order $|V(G)|+1$ defined as
%$G' = G- \{x_1x_2,x_3x_4\} \cup \{(v,x_i) | i=1,2,3,4\}$, where
%$v=V(G')-V(G)$.
Let $G' = G- \{x_1x_2,x_3x_4\} \cup \{(v,x_i) | i=1,2,3,4\}$ be the graph of order $|V(G)|+1$, where
$v=V(G')-V(G)$.
Then $G'$ has girth at least $5$.
\end{lemma}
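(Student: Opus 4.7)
The plan is a standard short-circuit argument by contradiction: suppose $G'$ contains a circuit $C$ with $|C| \in \{3,4\}$ and case-split on whether the new vertex $v$ lies on $C$. If $v \notin V(C)$, every edge of $C$ belongs to $E(G) \setminus \{x_1 x_2, x_3 x_4\}$, so $C$ is a circuit of $G$, contradicting $g(G) \geq 5$.

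So assume $v \in V(C)$. Since the only edges of $G'$ incident to $v$ are the four new edges $v x_k$ ($k=1,2,3,4$), the two neighbours of $v$ along $C$ are distinct vertices $x_i, x_j$ with $i \ne j$. In the triangle subcase $|C|=3$, the chord $x_i x_j$ must lie in $E(G')$; but for $\{i,j\}=\{1,2\}$ or $\{3,4\}$ that edge has been removed, and for every ``mixed'' pair the assumption on the $x_k$'s is read as excluding the adjacency $x_i x_j$ (since otherwise the common-neighbour condition would be violated by one of the remaining endpoints). In the $|C|=4$ subcase, write $C = v\, x_i\, w\, x_j\, v$ with $w \neq v$; then both $x_i w$ and $w x_j$ lie in $E(G)$, so $w \in N_G(x_i) \cap N_G(x_j)$, contradicting the hypothesis $N(x_i) \cap N(x_j) = \emptyset$.

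The one delicate point is the ``intermediate'' vertex $w$ in the $|C|=4$ subcase: a priori $w$ could be any vertex of $G$, including one of the remaining $x_k$ with $k \notin \{i,j\}$. This case is handled by the very same hypothesis, because if $w = x_k$ then $x_k$ becomes a common neighbour of $x_i$ and $x_j$, which is forbidden. Once this observation is in place, every choice of $C$ with $|C|\le 4$ has been ruled out, so $g(G') \ge 5$, as required. The main obstacle is thus less a combinatorial difficulty than a careful bookkeeping of where the vertices of a hypothetical short circuit can lie; the empty-intersection hypothesis is precisely what is needed in every sub-case.
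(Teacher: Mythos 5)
Your proof is correct and follows essentially the same route as the paper's: rule out circuits avoiding $v$ by the girth of $G$, rule out triangles through $v$ by the independence of $\{x_1,x_2,x_3,x_4\}$ in $G'$, and rule out $4$--circuits through $v$ because $v$ is the only common neighbour of $x_i$ and $x_j$ in $G'$. You are in fact slightly more careful than the paper, since you justify explicitly why the mixed pairs $x_ix_j$ are non--adjacent (a point the paper's one--line claim of independence leaves implicit).
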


\begin{proof}
Let $C$ be the shortest circuit in $G'$. If $E(C) \subset E(G)$ then, by hypothesis, $|C| > 4$.
Otherwise $v \in V(C)$ and $x_ivx_j$ is a path in $C$ for some $i,j \in \{1,2,3,4\}$, $i \ne j$.
In $G'$ the set $\{x_i | i = 1,2,3,4\}$ is independent, so $|C| > 3$.
By hypothesis, $N(x_i) \cap N(x_j) = v$ in $G'$ and hence $|C| > 4$.
\end{proof}

Let $G_1$ be a graph on $q$ vertices labelled from $0$ through $q-1$ and
defined as follows $G_1 := H_1 - \{(1,3n), (\lfloor \frac{3n+1}{2} \rfloor, 3n + \lfloor \frac{3n+1}{2} \rfloor)\}
\cup \{ (0,1), (0,\lfloor \frac{3n+1}{2} \rfloor), (0,3n),$ $(0, 3n + \lfloor \frac{3n+1}{2} \rfloor) \}$

\begin{lemma}\label{G_1g5}
The graph $G_1$ has girth at least $5$.
\end{lemma}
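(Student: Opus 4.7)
The plan is to recognise that $G_1$ is precisely the graph produced by applying the construction of Lemma \ref{TchInd} to $H_1$, with the role of the new vertex $v$ played by $0$, and the two removed edges being $x_1x_2 = (1, 3n) \in A_1$ and $x_3 x_4 = (m, 3n+m) \in C_1$, where $m := \lfloor (3n+1)/2 \rfloor$. Indeed, after the deletion of these two edges, the four added edges in $G_1$ are exactly $(0, x_i)$ for $i=1,2,3,4$. Since $H_1$ has girth at least $5$ by Lemma \ref{H1g5}, once the hypotheses of Lemma \ref{TchInd} are checked, the conclusion follows directly.

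So the real content is verifying the hypotheses of Lemma \ref{TchInd} in this concrete instance: (a) $(1,3n)$ and $(m, 3n+m)$ are both edges of $H_1$, (b) they are independent, (c) the four endpoints have pairwise disjoint (open) neighbourhoods in $H_1$. Point (a) is direct: $(1,3n) = (3n,1) \in A_1$ and $(m, 3n+m) \in C_1$ by the defining formulas of $E(H_1)$. Point (b) reduces to observing that the four labels $\{1,\, 3n,\, m,\, 3n+m\}$ are pairwise distinct, which holds since $n \geq 5$ forces $8 \leq m < 3n$.

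For point (c) I would just read off the three neighbours of each of the four vertices from the definitions of $A_1, B_1, C_1$:
\[
N_{H_1}(1) = \{2, 3n, 3n+1\},\quad N_{H_1}(3n) = \{1,\, 3n-1,\, 6n\},
\]
\[
N_{H_1}(m) = \{m-1,\, m+1,\, 3n+m\},\quad N_{H_1}(3n+m) = \{3n+m-2,\, 3n+m+2,\, m\},
\]
using that $1 < m < 3n$ and $3n+3 \leq 3n+m \leq 6n-2$ (both guaranteed by $n \geq 5$), so that $m$ and $3n+m$ are interior vertices of the circuits $A_1$ and $B_1$ respectively. Then one checks pairwise disjointness by comparing labels; each comparison yields a linear equation in $n$ whose only solutions are small values ruled out by $n \geq 5$. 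This case analysis is the most tedious step, but it is entirely routine, and no case requires more than comparing two specific labels.

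Once (a), (b), (c) are established, Lemma \ref{TchInd} applies to $H_1$ with the chosen edges, and its conclusion states exactly that the resulting graph, which is $G_1$, has girth at least $5$.
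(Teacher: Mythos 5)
Your proposal is correct and follows essentially the same route as the paper: identify the two deleted edges $(1,3n)$ and $(\lfloor\frac{3n+1}{2}\rfloor,\,3n+\lfloor\frac{3n+1}{2}\rfloor)$, list the neighbourhoods of their four endpoints in $H_1$, check that the hypotheses of Lemma \ref{TchInd} hold, and invoke that lemma. (In fact your neighbourhood $N_{H_1}(3n+\lfloor\frac{3n+1}{2}\rfloor)=\{3n+\lfloor\frac{3n+1}{2}\rfloor-2,\,3n+\lfloor\frac{3n+1}{2}\rfloor+2,\,\lfloor\frac{3n+1}{2}\rfloor\}$ is the correct one given the definition of $B_1$, whereas the paper's printed list uses $\pm 1$ offsets, an apparent typo.)
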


\begin{proof}
The edges $e_1=(1,3n)$ and $e_2=(\lfloor \frac{3n+1}{2} \rfloor, 3n + \lfloor \frac{3n+1}{2} \rfloor)$ are
independent in $H_1$. The neighbourhoods of the endvertices of $e_1$ and $e_2$ are:
$$
\begin{array}{ll}
N(1) & = \{2,3n,3n+1\}; \\
N(\lfloor \frac{3n+1}{2} \rfloor) &
= \{\lfloor \frac{3n+1}{2} \rfloor -1 ,\lfloor \frac{3n+1}{2} \rfloor + 1,3n+\lfloor \frac{3n+1}{2} \rfloor\}; \\
N(3n) & =\{1,3n-1,6n\}; \\
N(3n + \lfloor \frac{3n+1}{2} \rfloor) &
= \{ 3n + \lfloor \frac{3n+1}{2} \rfloor -1 ,
3n + \lfloor \frac{3n+1}{2} \rfloor + 1,
\lfloor \frac{3n+1}{2} \rfloor\};
\end{array}
$$

\noindent which satisfy the hypothesis of Lemma \ref{TchInd}.
Since $G_1$ is constructed from $H_1$ as $G'$ from $G$ in Lemma \ref{TchInd},
we can conclude that $G_1$ has girth at least $5$.
\end{proof}

All together the weights of $H_1$ and $G_1$  modulo $p$ give

\begin{equation} \label{EqPw1}
{\cal P}_\omega := \left \{
\begin{array}{ll}
\pm \{ 1, 2,  \frac{3n+1}{2},  3n-2,  3n-1,  3n\} & \text{if } n  \text{ is odd} \\
\pm \{ 1, 2,  \frac{3n}{2},  \frac{3n+2}{2},  3n-2,  3n-1,  3n\} & \text{if } n  \text{ is even}
\end{array}
\right.
\end{equation}

%$$
%{\cal P}_\omega := \left \{
%\begin{array}{ll}
%\{\pm 1,\pm 2, \pm \frac{3n+1}{2}, \pm (3n-2), \pm (3n-1), \pm 3n\} & \text{if } n  \text{ is odd} \\
%\{\pm 1,\pm 2, \pm \lfloor\frac{3n+1}{2}\rfloor, \pm \lceil\frac{3n+1}{2}\rceil, \pm (3n-2), \pm (3n-1), \pm 3n\} & \text{if } n  \text{ is even}
%\end{array}
%\right.
%$$

Let $G_2$ be a graph on $q$ vertices labelled from $0$ through $q-1$ and
defined as follows:

\[
G_2 := \left \{
\begin{tabular}{ll}
$H_2 - \{(3,22), (5,24)\} \cup \{ (0,3), (0,5), (0,22),(0,24) \}$          & if $n  = 5$ \\
$H_2 - \{(3,3n+7), (4,3n+8)\}$          & \multirow{2}{*}{if $n  \ge 6$} \\
$\cup \{ (0,3), (0,4), (0,3n+7),(0, 3n+8) \}$ &   \\
\end{tabular}
\right.
\]

\

Note that for $n=5$ the edge $(0,3n+8)=(0,23)$ has weight $-8$ which lies already in ${\cal P}_\omega$
and Theorem \ref{AmalgamThm} cannot be applied. This is why, in the definition of $G_2$, we choose
to delete the edge $(5,24)$ from $H_2$, instead of $(4,3n+8)=(4,23)$.

\

\begin{lemma}\label{G_2g5}
The graph $G_2$ has girth at least $5$.
\end{lemma}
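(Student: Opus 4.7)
The plan is to reduce to Lemma \ref{TchInd} applied to $G = H_2$ and the two edges that are deleted in the construction of $G_2$. Indeed, by definition $G_2$ is obtained from $H_2$ exactly in the shape prescribed by that lemma: two edges are removed and a new vertex (labelled $0$) is attached to their four endpoints. Since $H_2$ has girth at least $5$ by Lemma \ref{H2g5}, it suffices to check the two hypotheses of Lemma \ref{TchInd}: (a) the two deleted edges are independent in $H_2$, and (b) the four endpoints have pairwise disjoint open neighbourhoods in $H_2$. The conclusion that $G_2$ has girth at least $5$ then follows verbatim from Lemma \ref{TchInd}, exactly as $G_1$ was handled in Lemma \ref{G_1g5}.

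Hypothesis (a) is immediate in both cases: the pairs $\{3,22\},\{5,24\}$ (for $n=5$) and $\{3,3n+7\},\{4,3n+8\}$ (for $n\ge 6$) share no common vertex. Hypothesis (b) is where the bulk of the verification lies, but it is entirely routine from the definitions of $A_2$, $B_2$ and $C_2$. Each vertex $i\in W_1$ has exactly three neighbours in $H_2$: two obtained from the $n$-cycles of $A_2$ (namely $i\pm 3$, reduced modulo $3n$ within $W_1$) and one prismatic neighbour from $C_2$. Analogously, each $j\in W_2$ has two $B_2$-neighbours ($j\pm 4$ within $W_2$) and one $C_2$-neighbour in $W_1$. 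Listing these three-element sets for the four endpoints in each case and comparing them pairwise is a short finite check; the assumption $n\ge 5$ ensures that the labels $6,\,3n,\,3n+7$ (respectively $1,7,3n+8$) in $W_1\cup W_2$ are all distinct from each other and from the endpoints themselves, so that no collision occurs.

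The only subtlety, and the reason two subcases appear in the definition of $G_2$, is the case $n=5$: there the natural choice of deleting the $C_2$-edge $(4,3n+8)=(4,23)$ would create a new edge $(0,23)$ of weight $-8$, which already lies in $\mathcal{P}_\omega$ by~\eqref{EqPw1} and would prevent the later application of Theorem \ref{AmalgamThm}. Replacing this edge by $(5,24)$ keeps the weight set disjoint from $\mathcal{P}_\omega$ while still yielding four vertices whose neighbourhoods one readily checks to be disjoint. Thus the only real work is the elementary neighbourhood bookkeeping; I do not expect any obstacle beyond this case-splitting.
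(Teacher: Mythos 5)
Your proposal is correct and follows essentially the same route as the paper: both reduce to Lemma \ref{TchInd} by checking that the two deleted edges of $H_2$ are independent with pairwise disjoint endpoint neighbourhoods, handling $n=5$ separately. If anything, your neighbourhood lists ($N(3)=\{6,3n,3n+7\}$, $N(4)=\{1,7,3n+8\}$, etc.) are the correct ones, whereas the printed proof reproduces the $G_1$ neighbourhoods verbatim by an evident copy-paste slip; the substance is identical.
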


\begin{proof}
First suppose $n \ge 6$. As in Lemma \ref{G_1g5}, the edges $(3,3n+7), (4,3n+8)$
are independent in $H_2$ and the neighbourhoods
$$
\begin{array}{ll}
N(3) & = \{6,3n,3n+7\}; \\
N(\lfloor \frac{3n+1}{2} \rfloor) &
= \{\lfloor \frac{3n+1}{2} \rfloor -1 ,\lfloor \frac{3n+1}{2} \rfloor + 1,3n+\lfloor \frac{3n+1}{2} \rfloor\}; \\
N(3n) & =\{1,3n-1,6n\}; \\
N(3n + \lfloor \frac{3n+1}{2} \rfloor) &
= \{ 3n + \lfloor \frac{3n+1}{2} \rfloor -1 ,
3n + \lfloor \frac{3n+1}{2} \rfloor + 1,
\lfloor \frac{3n+1}{2} \rfloor\};
\end{array}
$$
\noindent which satisfy the hypothesis of Lemma \ref{TchInd}.
Since $G_2$ is constructed from $H_2$ as $G'$ from $G$ in Lemma \ref{TchInd},
$G_2$ has girth at least $5$.

\noindent Similarly for $n=5$.
\end{proof}

All together the weights of $H_2$ and $G_2$  modulo $q$ give

\begin{equation} \label{EqLw1}
{\cal L}_\omega := \left \{
\begin{array}{ll}
\pm \{ 3, 4, 7, 9, 11, 12\} & \text{if } n  = 5 \\
\pm \{ 3, 4, 3n-7, 3n-6, 3n-4,  3n-3\} & \text{if } n  \ge 6
\end{array}
\right.
\end{equation}

%$$
%{\cal L}_\omega := \left \{
%\begin{array}{ll}
%\{\pm 3,\pm 4, \pm 5, \pm (3n-7), \pm (3n-6), \pm 3n-5, \pm (3n-4), \pm (3n-3)\} & \text{if } n  = 5 \\
%\{\pm 3,\pm 4, \pm (3n-7), \pm (3n-6), \pm 3n-5, \pm (3n-4), \pm (3n-3)\} & \text{if } n  \ge 6
%\end{array}
%\right.
%$$
\

\begin{theorem}\label{p1mod6} Let $q$ be a prime such that $q=6n+1$, $n \ge 2$. %for $n \ge 5$.
Then, there is a $(q+3-u)$--regular graph of girth $5$ and order $2(q^2-u-1)$, for each $0 \le u \le q-1$.
\end{theorem}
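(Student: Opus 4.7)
The plan is to apply Theorem~\ref{AmalgamThm} with $S = T = \{0\} \subseteq GF(q)$ and the four graphs $H_1, H_2, G_1, G_2$ explicitly constructed in this subsection; this covers the range $n \ge 5$, while the remaining cases $n = 2, 3, 4$ (corresponding to $q = 13, 19$; note $25$ is not prime) are deferred to Section~\ref{smallcases}. Since $|S| = |T| = 1$, the parameter requirements of Section~\ref{amal} match: $H_1$ is cubic on $q - |S| = q - 1$ vertices with girth at least $5$ (Lemma~\ref{H1g5}); $H_2$ is cubic on $q - |T| = q - 1$ vertices with girth at least $5$ (Lemma~\ref{H2g5}); $G_1$ is $(3,4)$-regular of order $q$ with a unique vertex of degree $4$, labelled $0$ in accordance with $T = \{0\}$, and girth at least $5$ (Lemma~\ref{G_1g5}); and analogously for $G_2$ (Lemma~\ref{G_2g5}).

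The only remaining hypothesis of Theorem~\ref{AmalgamThm} is the weight-disjointness ${\cal P}_\omega \cap {\cal L}_\omega = \emptyset$ in $\mathbb{Z}_q^{\ast}$. Working with the representatives recorded in (\ref{EqPw1}) and (\ref{EqLw1}), which all lie in $\{1, \ldots, 3n\}$, two weights $a,b$ coincide modulo $q = 6n + 1$ up to sign iff $a = b$ or $a + b = 6n + 1$. The additive collision is automatically excluded, since the maximum possible sum of a ${\cal P}_\omega$- and a ${\cal L}_\omega$-representative is bounded by $3n + (3n - 3) = 6n - 3 < 6n + 1$. Disjointness as integer sets is then a direct inspection, split into the subcases $n = 5$, $n \ge 6$ odd, and $n \ge 6$ even: the small values $3, 4$ are distinct from $1, 2$ and from $\lfloor (3n+1)/2 \rfloor, \lceil (3n+1)/2 \rceil$ (since $n \ge 5$ forces $(3n+1)/2 > 4$) and from $3n-2, 3n-1, 3n$; and the entries $3n - 7, 3n - 6, 3n - 4, 3n - 3$ of ${\cal L}_\omega$ satisfy $4 < 3n - 7$ and $3n - 3 < 3n - 2$ for $n \ge 6$, so they lie strictly between the small and large blocks of ${\cal P}_\omega$ and avoid the near-middle entry $\lfloor (3n+1)/2 \rfloor$. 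This elementary case analysis is the principal, though fully routine, obstacle.

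With all hypotheses verified, Theorem~\ref{AmalgamThm} immediately gives that $B^*_q(\{0\},\{0\},u)$ is $(q + 3 - u)$-regular of girth at least $5$, and the order follows from Lemma~\ref{Red2}, since the amalgam adds no vertices. To sharpen the girth to exactly $5$, I would invoke Remark~\ref{sharp5}(ii): the weight $1$ lies in ${\cal P}_\omega$, so for every $u < q - 1$ the graph $B^*_q(S, T, u)$ contains the $5$-circuit $((0,i),(0,j),[1,j],(1,i),[0,i])$; and in the extremal case $u = q - 1$ the explicit $5$-circuit $(1, 2, 3, 3n+3, 3n+1)$ of $H_1$ (identified in the proof of Lemma~\ref{H1g5}) is retained inside $P_0'$, finishing the proof.
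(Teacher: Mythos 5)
Your proposal is correct and follows essentially the same route as the paper: reduce with $S=T=\{0\}$, amalgamate the prescribed $H_1,H_2,G_1,G_2$, and invoke Theorem~\ref{AmalgamThm} after checking ${\cal P}_\omega\cap{\cal L}_\omega=\emptyset$, deferring $n=2,3$ to Section~\ref{smallcases}. The only difference is that you spell out the weight-disjointness verification (via the bound $a+b\le 6n-3<q$ and a case check on $\lfloor (3n+1)/2\rfloor$) and the exact-girth-$5$ argument, both of which the paper leaves as immediate consequences of (\ref{EqPw1}), (\ref{EqLw1}) and Remark~\ref{sharp5}.
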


\begin{proof}
We treat the cases $n=2, 3$ in Section \ref{smallcases}.
For $n=4$, $q=6n+1=25$ is not a prime, therefore we can assume that $n \ge 5$.

Let $S=T=\{0\}$ and choose $H_i$, $G_i$ for $i=1,2$ as previously described in this subsection.
Lemmas \ref{H1g5}, \ref{H2g5},  \ref{G_1g5}, \ref{G_2g5} together with (\ref{EqPw1})
and (\ref{EqLw1}) imply that the hypothesis of Theorem \ref{AmalgamThm}
are satisfied. Therefore, the graphs $B^*_q(S,T,u)$ are $(q+3-u)$--regular of girth $5$
and order $2(q^2-u-1)$ for each $0 \le u \le q-1$. Note that the girth of
$B^*_q(S,T,u)$ is exactly $5$ because $H_1$ has girth $5$ (cf. Remark \ref{sharp5}).
\end{proof}

%{\bf Note that we could have obtained the same results in this subsection using Voltage graph}

\subsection {Construction for primes $q=6n+5$}\label{constr6n+5}

We consider  $n \ge 3$ throughout this subsection and we treat smaller cases in Section \ref{smallcases}.
Let $H_1$ and $H_2$ be two graphs of order $q-1$ with the vertices labelled from $1$ through $6n+4$,
and partitioned into $W_1=\{1,2,..,3n+2\}$ and $W_2=\{3n+3,..,6n+4\}$.

Define the set of edges $E(H_1) = A_1 \cup B_1 \cup C_1$ as follows:

\

\resizebox{10.5cm}{!}{
\begin{minipage}{13cm}
\begin{tabular}{cll}
Set & \quad \quad \quad \quad \quad \quad Edges & \quad \quad Description \\
\hline $A_1$ & $\{(i,i+1) | i = 1, \ldots ,3n+1 \} \cup \{ (3n+2,1)\}$ & $(3n+2)$--circuit with weights \\
                                                                 & & $1$ and $3n+1$ \\
\hline $B_1$ & $\{(i,i+2) | i = 3n+3, \ldots ,6n + 2 \} $ & one or two circuits \\
             & $\cup \{ (6n+3,3n+3), (6n+4,3n+4)\}$     & according to the parity of $n$, \\
             &                                          & with weights $2$ and $3n$\\
\hline $C_1$ & $\{(i,3n+i+2) | i = 1,\ldots ,3n+2 \}$   & Prismatic edges between $W_1$ and $W_2$ \\
                                                      & & of weight $3n+2$

\end{tabular}
\end{minipage}
}

\

The graph $H_1$ is cubic and has weights $\pm \{1,2,3n,3n+1,3n+2\}$.

\

%\begin{lemma}\label{H1g5b}
%The graph $H_1$ has girth $5$
%\end{lemma}
%
%\begin{proof}
%Similar to Lemma \ref{H1g5}.
%\end{proof}

Define the set of edges $E(H_2) = A_2 \cup B_2 \cup C_2$ as follows:

\

\resizebox{11cm}{!}{
\begin{minipage}{13cm}
\begin{tabular}{cll}
Set & \quad \quad \quad \quad \quad \quad Edges & \quad \quad Description \\
\hline $A_2$ & $\{(i,i+3) | i = 1,\ldots,3n-1 \}$ & One $3n+2$--circuit with weights \\
             & $\cup \{ (3n,1), (3n+1,2), (3n+2,3)\}$  & $3$ and $3n-1$ \\
\hline $B_2$ & $\{(i,i+4) | i = 3n+3,\ldots,6n \} $                       & One, two or four circuits \\
             & $\cup \{ (6n+1,3n+3), (6n+2,3n+4), $                       & according to the congruency of $n$ modulo $4$, \\
             & $(6n+3,3n+5), (6n+4,3n+6)\}$                               & with weights $4$ and $3n-2$\\
\hline $C_2$ & $\{(i,3n+i+6) | i = 1,\ldots,3n-2 \}$                      & Prismatic edges between $W_1$ and $W_2$ \\
             & $\cup \{(3n-1, 3n+3), (3n, 3n+4), $                        & of weights $4$ and $3n+6 \equiv 3n-1 \, \mod \, q$ \\
             & $(3n+1, 3n+5), (3n+2, 3n+6) \}$                            &
\end{tabular}
\end{minipage}
}

\

The graph $H_2$ is cubic and has weights $\pm \{3,4,3n-2,3n-1\}$.

\

%\begin{lemma}\label{H2g5b}
%The graph $H_2$ has girth $5$
%\end{lemma}
%
%\begin{proof}
%Similar to Lemma \ref{H1g5}.
%\end{proof}

Let $G_1$ be a graph on $q$ vertices labeled from $0$ through $q-1$ and defined as follows:

%\resizebox{12cm}{!}{
%\begin{minipage}{13cm}
\[
G_1 := \left \{
\begin{tabular}{ll}
$H_1 - \{(1,12), (6,17)\} \cup \{ (0,1), (0,6), (0,12),(0,17) \}$          & if $n  = 3$ \\
$H_1 - \{(1,3n+3), (\lfloor \frac{3n+1}{2} \rfloor, 3n + 2 + \lfloor \frac{3n+1}{2} \rfloor)\}$          & \multirow{2}{*}{if $n  \ge 4$} \\
$\cup \{ (0,1), (0,\lfloor \frac{3n+1}{2} \rfloor),(0,3n+3),(0, 3n + 2 + \lfloor \frac{3n+1}{2} \rfloor) \}$ &   \\
\end{tabular}
\right.
\]
%\end{minipage}
%}

\

%\begin{lemma}\label{G_1g5b}
%The graph $G_1$ has girth $5$
%\end{lemma}
%
%\begin{proof}
%Similar to Lemma \ref{G_1g5}.
%\end{proof}

Note that for $n=3$ the independent edges $(1,3n+3)=(1,12)$ and $(\lfloor \frac{3n+1}{2} \rfloor, 3n + 2 + \lfloor \frac{3n+1}{2} \rfloor)=(5,6)$
of $H_1$ have a common neighbour, namely $N_{H_1}(12) \cap N_{H_1}(16)=\{14\}$, and Lemma \ref{TchInd} cannot be applied.
This is why we choose the independent edges $(1,12)$ and $(6,17)$ in $H_1$ with pairwise disjoint neighbourhoods to define $G_1$.

\

All together the weights of $H_1$ and $G_1$  modulo $q$ give

\begin{equation} \label{EqPw2}
{\cal P}_\omega := \left \{
\begin{array}{ll}
\pm \{ 1, 2, 6, 9, 10, 11\}                                      & \text{if } n=3 \\
\pm \{ 1, 2,  \frac{3n+1}{2}, \frac{3n+5}{2}, 3n,  3n+1,  3n+2\} & \text{if } n  \text{ is odd and } n \ge 5\\
\pm \{ 1, 2,  \frac{3n}{2},   \frac{3n+6}{2}, 3n,  3n+1,  3n+2\} & \text{if } n  \text{ is even}
\end{array}
\right.
\end{equation}

Let $G_2$ be a graph on $q$ vertices labeled from $0$ through $q-1$ and
defined as follows $G_2 := H_2 - \{(3,3n+9), (4,3n+10)\}
\cup \{ (0,3), (0,4), (0,3n+9),$ $(0, 3n+10) \}$.

%\begin{lemma}\label{G_2g5b}
%The graph $G_2$ has girth $5$
%\end{lemma}
%
%\begin{proof}
%Similar to Lemma \ref{G_2g5}, except for $n=5$, but in such case $p=35$ which is not prime.
%\end{proof}

All together the weights of $H_2$ and $G_2$  modulo $q$ give
\begin{equation} \label{EqLw2}
{\cal L}_\omega := \pm \{ 3, 4,  3n-5,  3n-4,  3n-2,  3n-1\}.
\end{equation}

\begin{lemma}\label{H12G12g5b}
The graphs $H_1,H_2,G_1$ and $G_2$ have girth at least $5$.
\end{lemma}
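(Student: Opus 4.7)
The plan is to treat the four graphs in two blocks, since $H_1, H_2$ are intrinsic circulant-like constructions while $G_1, G_2$ are obtained from them by the local surgery of Lemma \ref{TchInd}.

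First I would prove that $H_1$ has girth at least $5$ by mimicking the argument of Lemma \ref{H1g5}. The induced subgraph $H_1[W_1]$ is the single Hamilton $(3n+2)$-circuit $A_1$, and $H_1[W_2]$ is a union of circuits of length at least $5$ (given $n \ge 3$), so any short circuit $C$ must leave each part, i.e.\ must use at least two prismatic edges of $C_1$. A path $xyz$ with $x,y \in W_1$, $z \in W_2$ has $y = x\pm 1$ and $z = (x\pm 1) + (3n+2)$, so $z \ne x+(3n+2)$ and $|C|\ne 3$. For $|C|=4$, the fourth vertex $w$ must also lie in $W_2$ by the bipartite-like structure between $W_1$ and $W_2$, so $wz$ is an edge of $B_1$ with $z - w \in \pm\{2, 3n\}$; but then the pair $(x,y)\leftrightarrow(w,z)$ would force $y - x \in \pm\{2,3n\}\pmod q$, contradicting $y - x \in \pm\{1, 3n+1\}$ since $n \ge 3$. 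A symmetric case analysis handles paths with $x\in W_1$, $y,z \in W_2$. The proof for $H_2$ is identical in spirit: inside $W_1$ and $W_2$, $A_2$ and $B_2$ are unions of cycles of length $\ge 5$ for $n \ge 3$, and the weight sets $\pm\{3,3n-1\}$ (from $A_2$) and $\pm\{4,3n-2\}$ (from $B_2$) together with the prismatic weights $\pm\{4, 3n-1\}$ of $C_2$ do not combine to produce a $3$-cycle or a $4$-cycle, which is the same kind of check already performed in Lemma \ref{H2g5}.

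For $G_1$ and $G_2$ the strategy is to apply Lemma \ref{TchInd}. For $G_2$, the deleted edges are $(3, 3n+9)$ and $(4, 3n+10)$; they are independent in $H_2$, and one checks that the four neighbourhoods
\[
N_{H_2}(3),\quad N_{H_2}(4),\quad N_{H_2}(3n+9),\quad N_{H_2}(3n+10)
\]
are pairwise disjoint. Each of these is a $3$-set obtained from the defining weight sets, so disjointness reduces to verifying that differences such as $3-4 = -1$, $(3n+9)-(3n+10)=-1$, $(3n+9)-3=3n+6$, etc., are never of the form $\pm 3, \pm 4, \pm(3n-1), \pm(3n-2)$ modulo the vertex set; this is routine for $n\ge 3$. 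For $G_1$ with $n\ge 4$ the edges $(1, 3n+3)$ and $(\lfloor (3n+1)/2\rfloor, 3n+2 + \lfloor (3n+1)/2\rfloor)$ are independent, and their four endpoints have pairwise disjoint $H_1$-neighbourhoods exactly as in Lemma \ref{G_1g5}: the neighbourhoods use only weights from $\pm\{1, 3n+1, 3n+2\}$, and the pairwise differences of the chosen endpoints avoid this set for $n\ge 4$. The case $n=3$ is handled separately using the edges $(1,12)$ and $(6,17)$; here one directly verifies
\[
N_{H_1}(1)=\{2,14,17\},\; N_{H_1}(6)=\{5,7,19\},\; N_{H_1}(12)=\{11,13,3\},\; N_{H_1}(17)=\{16,15,4\},
\]
which are pairwise disjoint, so Lemma \ref{TchInd} applies.

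The main obstacle is the disjointness verification for $G_1$, because the explicit remark already points out that the naive choice fails for $n=3$. So the proof must split into $n=3$ and $n\ge 4$ and confirm in each case that no neighbour collision occurs; once that is done, Lemma \ref{TchInd} delivers girth $\ge 5$ without further work. The parallel verification for $G_2$ is uniform in $n\ge 3$ and is the easiest of the four.
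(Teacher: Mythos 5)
Your overall strategy is exactly the one the paper intends: the paper's own proof of this lemma is a one-line ``similar to Lemmas \ref{H1g5}, \ref{H2g5}, \ref{G_1g5}, \ref{G_2g5}'', and you correctly fill that in by redoing the crossing-path analysis for $H_1$ and $H_2$ and by invoking Lemma \ref{TchInd} for $G_1$ and $G_2$, including the necessary split into $n=3$ and $n\ge 4$ for $G_1$. However, your explicit neighbourhood computation in the $n=3$ case of $G_1$ is wrong. For $n=3$ (so $q=23$, $W_1=\{1,\dots,11\}$, $W_2=\{12,\dots,22\}$, and $C_1=\{(i,i+11)\}$) the actual neighbourhoods are
\[
N_{H_1}(1)=\{2,11,12\},\quad N_{H_1}(6)=\{5,7,17\},\quad N_{H_1}(12)=\{1,14,21\},\quad N_{H_1}(17)=\{6,15,19\},
\]
whereas you list $\{2,14,17\}$, $\{5,7,19\}$, $\{11,13,3\}$ and $\{16,15,4\}$; none of $14,17$ is adjacent to $1$, and $12$ is not adjacent to $11$, $13$ or $3$. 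The conclusion you need survives, since the correct four sets are still pairwise disjoint (note also that $|C|>3$ already rules out a deleted edge's endpoints sharing a neighbour), so Lemma \ref{TchInd} does apply, but as written the verification is not a proof. One further point worth making explicit in the $H_2$ step: unlike $C_1$, the matching $C_2$ is \emph{not} a uniform translation (it shifts by $3n+6$ on $\{1,\dots,3n-2\}$ and by $4$ on $\{3n-1,\dots,3n+2\}$), so a potential $4$--circuit with two prismatic edges does not simply force $z-w=y-x$; one must also check the mixed case where the two $W_1$--endpoints lie in different regimes of $C_2$, which does work out but is not covered by the ``same kind of check'' remark.
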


\begin{proof}
Similar to Lemmas \ref{H1g5}, \ref{H2g5}, \ref{G_1g5}, \ref{G_2g5}.
\end{proof}

Note that in general, the girth of $H_1$ is exactly $5$, since $(1,2,3,3n+5,3n+3)$ is a $5$--circuit in $H_1$.

%\begin{proof}
%Similar to Lemmas \ref{H1g5}, \ref{H2g5}, \ref{G_1g5}, \ref{G_2g5}, except for $n=3$ and $n=5$.
%For $n=3$ the graphs $H_1$ and $H_2$ have girth at least $5$, but in the construction of the graph
%$G_1$, the independent edges $(1,12)$ and $(5,16)$ have a common neighbour, namely
%$N_{H_1}(12) \cap N_{H_1}(16)=\{14\}$, and Lemma \ref{TchInd} cannot be applied. However,
%we can choose the independent edges $(1,12)$ and $(6,17)$ in $H_1$ with pairwise disjoint neighbourhoods
%and define $G_1:=H_1-\{(1,12),(6,17)\} \cup \{(0,1),(0,6),(0,12),(0,17)\}$ with weights
%${\cal P}_{\omega}=\{1,2,6,9,10,11\}$. We can still keep $G_2$ which has weights
%${\cal L}_{\omega}=\{3,4,5,7,8\}$.
%For $n=5$, the graph $H_2[W_2]$ consists of four $4$--circuits, but in such case $p=35$, hence not a prime.
%Note also that in general, the girth of $H_1$ is exactly $5$, since $(1,2,3,3n+5,3n+3)$ is a $5$--circuit in $H_1$.
%\end{proof}

\begin{theorem}\label{p5mod6} Let $q$ be a prime such that $q=6n+5$, for $n \ge 3$.
Then, there is a $(q+3-u)$--regular graph of girth $5$ and order $2(q^2-u-1)$ for each $0 \le u \le q-1$.
\end{theorem}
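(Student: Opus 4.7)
The plan is to argue in complete parallel with the proof of Theorem \ref{p1mod6}, using the graphs $H_1, H_2, G_1, G_2$ constructed in Subsection \ref{constr6n+5} and relying on the weight computations \eqref{EqPw2} and \eqref{EqLw2} together with Lemma \ref{H12G12g5b}. Specifically, I would take $S = T = \{0\} \subseteq GF(q)$, so that exactly one vertex is removed from $P_0$ and one from $L_0$, giving $|S| + |T| = 2$. With the amalgam convention of Section \ref{amal}, the graphs $H_1, H_2$ are cubic of order $q-1$ (since $|S|=|T|=1$, $H_2$ is regular, not bi-regular), and $G_1, G_2$ are bi-regular of order $q$ with $|S|=|T|=1$ vertex of degree $k+1 = 4$, matching exactly the structural requirements preceding Theorem \ref{AmalgamThm}.

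Next I would verify the hypothesis ${\cal P}_\omega \cap {\cal L}_\omega = \emptyset$ of Theorem \ref{AmalgamThm}. By \eqref{EqPw2} and \eqref{EqLw2}, the weight sets are explicit subsets of ${\mathbb Z}_q^*$ depending on the parity of $n$ (with a separate expression for $n=3$). For each of the three cases I would simply check that the listed weights of ${\cal P}_\omega$ and ${\cal L}_\omega$ and their negatives modulo $q = 6n+5$ share no common element: the ${\cal P}_\omega$ entries cluster around $1, 2, (3n+1)/2$ or $(3n)/2$, and $3n, 3n+1, 3n+2$, while the ${\cal L}_\omega$ entries are concentrated at $3, 4, 3n-5, 3n-4, 3n-2, 3n-1$, so disjointness holds modulo $q$. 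This is the most delicate step, since one must also check that negatives do not collide, but the sizes of these sets are small and bounded independently of $n$, so the verification amounts to comparing a constant number of residues.

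Having verified disjointness of weights and having girth at least $5$ for all four auxiliary graphs by Lemma \ref{H12G12g5b}, I would apply Theorem \ref{AmalgamThm} with this choice of $S, T, u$ for each $0 \le u \le q-1$. The theorem delivers that $B^*_q(S,T,u)$ is $(q + k - u)$-regular of girth at least $5$ and of order $2(q^2 - qu) - |S| - |T| \cdot$ correction (in the formula of Theorem \ref{AmalgamThm}, the order is $2(q-u) - |S| - |T|$ in its proved form, meaning $2(q^2-qu) - |S| - |T|$); with $k=3$ and $|S|+|T|=2$ this becomes $(q+3-u)$-regular of order $2(q^2 - qu - 1)$ as required (for $u=0$, $2(q^2-1)$, two fewer than the unreduced count). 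Finally, to upgrade girth at least $5$ to girth exactly $5$, I would invoke Remark \ref{sharp5}(i): the graph $H_1$ contains the $5$-circuit $(1,2,3,3n+5,3n+3)$, which by construction survives in $B^*_q(S,T,u)$, so the girth is exactly $5$.

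The main obstacle is the weight-disjointness check, but since \eqref{EqPw2} and \eqref{EqLw2} already collect all weights as a short explicit list, this reduces to a finite inspection that is uniform across the three cases of $n$. Everything else, namely regularity, order, and girth, follows mechanically from Theorem \ref{AmalgamThm} and Remark \ref{sharp5}, in direct analogy with the proof of Theorem \ref{p1mod6}.
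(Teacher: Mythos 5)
Your proposal is correct and follows essentially the same route as the paper's own proof: the paper likewise takes $S=T=\{0\}$, invokes (\ref{EqPw2}), (\ref{EqLw2}) and Lemma \ref{H12G12g5b} to satisfy the hypotheses of Theorem \ref{AmalgamThm}, and concludes that the girth is exactly $5$ from the $5$--circuit in $H_1$ via Remark \ref{sharp5}. Your added care about the order computation $2(q^2-qu)-|S|-|T|$ and the explicit finite check of ${\cal P}_\omega \cap {\cal L}_\omega = \emptyset$ only makes explicit what the paper leaves implicit.
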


\begin{proof}
%Assume first that $n \ge 5$, and refer to the appendix for smaller cases.
%
Let $S=T=\{0\}$ and choose $H_i$, $G_i$ for $i=1,2$ as previously described in this subsection.
By (\ref{EqPw2}), (\ref{EqLw2}) and Lemma \ref{H12G12g5b}, all the hypothesis of Theorem \ref{AmalgamThm} are satisfied.
Thus, the graphs $B^*_q(S,T,u)$ are $(q+3-u)$--regular of girth $5$
and order $2(q^2-u-1)$ for each $0 \le u \le q-1$. Note that the girth of
$B^*_q(S,T,u)$ is exactly $5$ because $H_1$ has girth $5$ (cf. Remark \ref{sharp5}).
\end{proof}

%\newpage

\section{Small Cases}\label{smallcases}

We now present some constructions of graphs $B^*_q(S,T,u)$ for small
prime values of $q$. The first two constructions complete the proof of
Theorem \ref{p1mod6}.
%{\bf To complete the case $p=6n+5$ it would be nice to have
%these small constructions for $q=23$ an $q=29$ as well.}

\subsection{$q=13$}\label{q13section}

In this case, let $S=T=\{0\}$, $H_1, H_2, G_1$ and $G_2$ as in Figure \ref{q13}. 
The graphs $G_i$ are obtained from $H_i$ deleting two independent edges satisfying 
the hypothesis of Lemma \ref{TchInd} and joining all their end--vertices to a new 
vertex, say $0$, for $i=1,2$.
Specifically
$G_1=H_1-\{(1,10),(3,12)\} \cup \{(0,1),(0,3),(0,10),(0,12)\},$
$G_2=H_2-\{(2,8),(5,11)\} \cup \{(0,2),(0,8),$ $(0,5),(0,11)\}$ 
and as unlabeled graphs $G_1$ is isomorphic to $G_2$.
Hence, the graphs $G_1$ and $G_2$ have order $13$, girth $5$ and are
bi--regular with one vertex of degree four and all other vertices of degree three.

\setlength{\intextsep}{10pt}
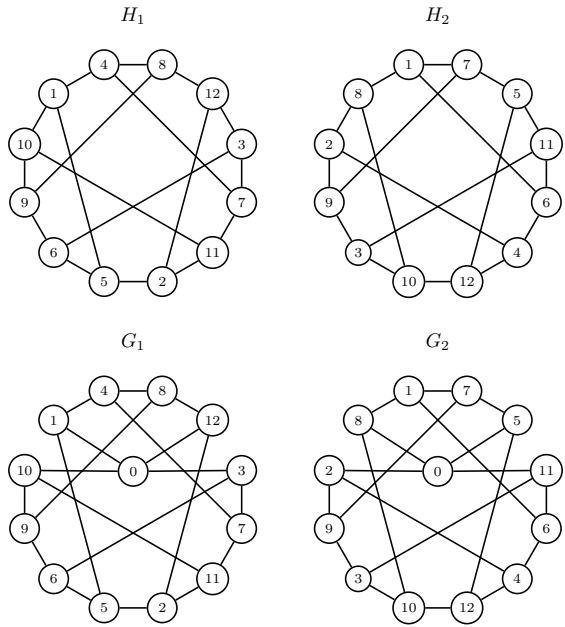
\begin{figure}[ht]
\begin{center}
\resizebox{!}{4.2cm}{
\begin{minipage}{16cm}
\begin{center}
\begin{tabular}{cc}
$H_1$ & $H_2$\\
      &      \\
\begin{pspicture}(-2.5,-2)(2.5,2)
\psset{unit=2}
\SpecialCoor
\cnodeput(1;135){N1}{\scriptsize $\, 1 \,$}
\cnodeput(1;105){N4}{\scriptsize $\, 4 \,$}
\cnodeput(1;75){N8}{\scriptsize $\, 8 \,$}
\cnodeput(1;45){N12}{\scriptsize $12$}
\cnodeput(1;15){N3}{\scriptsize $\, 3 \,$}
\cnodeput(1;345){N7}{\scriptsize $\, 7 \,$}
\cnodeput(1;315){N11}{\scriptsize $11$}
\cnodeput(1;285){N2}{\scriptsize $\, 2 \,$}
\cnodeput(1;255){N5}{\scriptsize $\, 5 \,$}
\cnodeput(1;225){N6}{\scriptsize $\, 6 \,$}
\cnodeput(1;195){N9}{\scriptsize $\, 9 \,$}
\cnodeput(1;165){N10}{\scriptsize $10$}
\ncline{N1}{N4}
\ncline{N4}{N8}
\ncline{N8}{N12}
\ncline{N12}{N3}
\ncline{N3}{N7}
\ncline{N7}{N11}
\ncline{N11}{N2}
\ncline{N2}{N5}
\ncline{N5}{N6}
\ncline{N6}{N9}
\ncline{N9}{N10}
\ncline{N10}{N1}
\ncline{N1}{N5}
\ncline{N2}{N12}
\ncline{N3}{N6}
\ncline{N4}{N7}
\ncline{N8}{N9}
\ncline{N10}{N11}
\end{pspicture} &
\begin{pspicture}(-2.5,-2)(2.5,2)
\psset{unit=2}
\SpecialCoor
\cnodeput(1;75){N7}{\scriptsize $\, 7 \,$}
\cnodeput(1;45){N5}{\scriptsize $\, 5 \,$}
\cnodeput(1;15){N11}{\scriptsize $11$}
\cnodeput(1;345){N6}{\scriptsize $\, 6 \,$}
\cnodeput(1;315){N4}{\scriptsize $\, 4 \,$}
\cnodeput(1;285){N12}{\scriptsize $12$}
\cnodeput(1;255){N10}{\scriptsize $10$}
\cnodeput(1;225){N3}{\scriptsize $3$}
\cnodeput(1;195){N9}{\scriptsize $\, 9 \,$}
\cnodeput(1;165){N2}{\scriptsize $\, 2 \,$}
\cnodeput(1;135){N8}{\scriptsize $\, 8 \,$}
\cnodeput(1;105){N1}{\scriptsize $\, 1 \,$}
\ncline{N1}{N8}
\ncline{N8}{N2}
\ncline{N2}{N9}
\ncline{N9}{N3}
\ncline{N3}{N10}
\ncline{N10}{N12}
\ncline{N12}{N4}
\ncline{N4}{N6}
\ncline{N6}{N11}
\ncline{N11}{N5}
\ncline{N5}{N7}
\ncline{N7}{N1}
\ncline{N1}{N6}
\ncline{N2}{N4}
\ncline{N3}{N11}
\ncline{N5}{N12}
\ncline{N7}{N9}
\ncline{N8}{N10}
\end{pspicture}\\
      &      \\
$G_1$ & $G_2$\\
      &      \\
\begin{pspicture}(-2.5,-2)(2.5,2)
\psset{unit=2}
\SpecialCoor
\cnodeput(0.25;90){N0}{\scriptsize $\, 0 \,$}
\cnodeput(1;135){N1}{\scriptsize $\, 1 \,$}
\cnodeput(1;105){N4}{\scriptsize $\, 4 \,$}
\cnodeput(1;75){N8}{\scriptsize $\, 8 \,$}
\cnodeput(1;45){N12}{\scriptsize $12$}
\cnodeput(1;15){N3}{\scriptsize $\, 3 \,$}
\cnodeput(1;345){N7}{\scriptsize $\, 7 \,$}
\cnodeput(1;315){N11}{\scriptsize $11$}
\cnodeput(1;285){N2}{\scriptsize $\, 2 \,$}
\cnodeput(1;255){N5}{\scriptsize $\, 5 \,$}
\cnodeput(1;225){N6}{\scriptsize $\, 6 \,$}
\cnodeput(1;195){N9}{\scriptsize $\, 9 \,$}
\cnodeput(1;165){N10}{\scriptsize $10$}
\ncline{N1}{N4}
\ncline{N4}{N8}
\ncline{N8}{N12}
%\ncline{N12}{N3}
\ncline{N3}{N7}
\ncline{N7}{N11}
\ncline{N11}{N2}
\ncline{N2}{N5}
\ncline{N5}{N6}
\ncline{N6}{N9}
\ncline{N9}{N10}
%\ncline{N10}{N1}
\ncline{N1}{N5}
\ncline{N2}{N12}
\ncline{N3}{N6}
\ncline{N4}{N7}
\ncline{N8}{N9}
\ncline{N10}{N11}
\ncline{N0}{N1}
\ncline{N0}{N3}
\ncline{N0}{N10}
\ncline{N0}{N12}
\end{pspicture} &
\begin{pspicture}(-2.5,-2)(2.5,2)
\psset{unit=2}
\SpecialCoor
\cnodeput(0.25;90){N0}{\scriptsize $\, 0 \,$}
\cnodeput(1;75){N7}{\scriptsize $\, 7 \,$}
\cnodeput(1;45){N5}{\scriptsize $\, 5 \,$}
\cnodeput(1;15){N11}{\scriptsize $11$}
\cnodeput(1;345){N6}{\scriptsize $\, 6 \,$}
\cnodeput(1;315){N4}{\scriptsize $\, 4 \,$}
\cnodeput(1;285){N12}{\scriptsize $12$}
\cnodeput(1;255){N10}{\scriptsize $10$}
\cnodeput(1;225){N3}{\scriptsize $3$}
\cnodeput(1;195){N9}{\scriptsize $\, 9 \,$}
\cnodeput(1;165){N2}{\scriptsize $\, 2 \,$}
\cnodeput(1;135){N8}{\scriptsize $\, 8 \,$}
\cnodeput(1;105){N1}{\scriptsize $\, 1 \,$}
\ncline{N1}{N8}
%\ncline{N8}{N2}
\ncline{N2}{N9}
\ncline{N9}{N3}
\ncline{N3}{N10}
\ncline{N10}{N12}
\ncline{N12}{N4}
\ncline{N4}{N6}
\ncline{N6}{N11}
%\ncline{N11}{N5}
\ncline{N5}{N7}
\ncline{N7}{N1}
\ncline{N1}{N6}
\ncline{N2}{N4}
\ncline{N3}{N11}
\ncline{N5}{N12}
\ncline{N7}{N9}
\ncline{N8}{N10}
\ncline{N0}{N5}
\ncline{N0}{N2}
\ncline{N0}{N8}
\ncline{N0}{N11}
\end{pspicture}
\\
\end{tabular}
\end{center}
\end{minipage}
}
\caption{The graphs $H_i$ and $G_i$ for $i\in{1,2}$ and $q=13$.}
\label{q13}
\end{center}
\end{figure}

Note that as unlabeled graphs $H_1$ is isomorphic to $H_2$ and they are both isomorphic to one of the two
cubic graphs on $12$ vertices of girth $5$, specifically {\em $12$ cubic graph $84$} from
\cite{M09, WolfAlpha}.
%Note that since $H_1$ has girth $5$, the upper bound on $u$ is as large as it can be.

    \begin{lemma}\label{q13lemma}
Let $S=T=\{0\}$, $H_1, H_2, G_1$ and $G_2$ as described above. Then the graph $B^*_{13}(0,0,u)$ is a $(16-u)$--regular
graph of girth $5$  and order $336-26u$, for $0 \le u \le q-1$,.
    \end{lemma}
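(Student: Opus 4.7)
The plan is to verify that the four graphs $H_1,H_2,G_1,G_2$ displayed in Figure \ref{q13} satisfy every hypothesis of Theorem \ref{AmalgamThm} with $S=T=\{0\}$ and $k=3$, and then read off the conclusion.

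First, I would confirm from Figure \ref{q13} that $H_1$ and $H_2$ are cubic of order $12$, and that $G_1$ and $G_2$ are $(3,4)$-regular of order $13$ with the new vertex $0$ as the unique vertex of degree $4$. These parameters match the requirements on $H_1,H_2$ and $G_1,G_2$ in Theorem \ref{AmalgamThm} when $|S|=|T|=1$. Next I would verify girth at least $5$: for $H_1$ and $H_2$ by direct inspection (or by invoking the remark that they are both isomorphic to the cubic girth-$5$ graph \emph{12 cubic graph 84}); for $G_1$ and $G_2$ by checking that, in the respective $H_i$, the two removed edges are independent and the neighbourhoods of their four endvertices are pairwise disjoint, so that Lemma \ref{TchInd} applies. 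For $G_1$ this amounts to computing $N_{H_1}(1),N_{H_1}(10),N_{H_1}(3),N_{H_1}(12)$ and verifying that they are pairwise disjoint; analogously for $G_2$ with the edges $(2,8)$ and $(5,11)$.

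Finally I would tabulate the Cayley colors. A direct computation modulo $13$ yields
${\cal P}_\omega = \pm\{1,3,4\}$ from the edges of $H_1\cup G_1$ and ${\cal L}_\omega = \pm\{2,5,6\}$ from the edges of $H_2\cup G_2$; the two sets are disjoint in $\mathbb{Z}_{13}^{*}$. Theorem \ref{AmalgamThm} then delivers a $(q+k-u)=(16-u)$-regular graph of girth at least $5$ and of order $2(q^{2}-qu)-|S|-|T|=336-26u$, for every $0\le u\le q-1$. Since $H_1$ contains a $5$-cycle (for instance $(1,4,7,11,10)$), Remark \ref{sharp5}(i) promotes the girth to exactly $5$.

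Conceptually there is no real obstacle: the proof is a verification that a specific small example fits the general amalgam framework already established in Sections \ref{reduc}--\ref{amal}. The only thing that can fail is either a weight collision between ${\cal P}_\omega$ and ${\cal L}_\omega$, or a shared neighbour among the four endvertices used to define $G_i$; both pitfalls were already anticipated when the graphs in Figure \ref{q13} were chosen, and the most delicate bookkeeping step is the reduction of edge weights modulo $13$ (e.g.\ recognising that $\pm 9\equiv \pm 4$ and $\pm 8\equiv \pm 5$) when building the two weight multisets.
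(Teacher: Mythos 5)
Your proposal is correct and follows essentially the same route as the paper: the paper's proof likewise just records ${\cal P}_\omega=\pm\{1,3,4\}$ and ${\cal L}_\omega=\pm\{2,5,6\}$ and invokes Theorem \ref{AmalgamThm}, with the girth and bi--regularity of $G_1,G_2$ (via Lemma \ref{TchInd}) established in the surrounding text exactly as you describe. Your extra details (the explicit $5$--cycle in $H_1$ and the modular reductions $\pm 9\equiv\mp 4$, $\pm 8\equiv\mp 5$) are accurate and only make the verification more explicit.
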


    \begin{proof}
The weights of these graphs are ${\cal{P}}_{\omega}=\pm \{ 1, 3, 4\}$ and
${\cal{L}}_{\omega}=\pm \{ 2, 5, 6\}$. Thus, by Theorem \ref{AmalgamThm},
the graph $B^*_{13}(0,0,u)$ is a $(16-u)$--regular
graph of girth $5$  and order $26(13-u)-2=336-26u$, for $0 \le u \le q-1$.
    \end{proof}

 \begin{itemize*}
 \item For $u=0$, we obtain a $16$--regular graph of girth $5$ and order $336$,
 with exactly the same order as the $(16,5)$--graphs that appear in $\cite{J05,F10}$;
 \item  for $u=1$, we obtain a $15$--regular graph of girth $5$ and $310$ vertices
which has two vertices less than the $(15,5)$--graphs that appear in  $\cite{J05,F10}$;
 \item for $u=2$ we obtain a $14$--regular graph of girth $5$ and $284$ vertices which has
 four vertices less than the $(14,5)$--graph in $\cite{J05}$;
% \item The $(16-u,5)$--graphs, for $3 \le u \le 12$, that we constructed, have more vertices
% than previously known ones.
 \end{itemize*}

\subsection{$q=19$}

Let $S=T=\{0\}$ and let $H_1, H_2, G_1$ and $G_2$ be as in Figure \ref{q19}.
The graphs $G_i$ are obtained from $H_i$ deleting two independent edges satisfying the
hypothesis of Lemma \ref{TchInd} and joining all their end--vertices to a new vertex, say $0$, for $i=1,2$.
Specifically
$G_1=H_1-\{(1,10),(9,16)\} \cup \{(0,1),(0,9),(0,10),(0,16)\}$ and
$G_2=H_2-\{(8,13),(11,15)\} \cup \{(0,8),(0,13),(0,11),(0,15)\}$.
Hence, the graphs $G_1$ and $G_2$ have order $19$, girth $5$ and are
bi--regular with one vertex of degree four and all other vertices of degree $3$.

%In Figure \ref{q19} appears the four graphs $H_1$, $G_1$, $H_2$, $G_2$. Also, the labels of the
%vertices corresponds to the second coordinate of the points (in $H_1$ and $G_1$) or the lines (in $H_2$ and $G_2$).

\

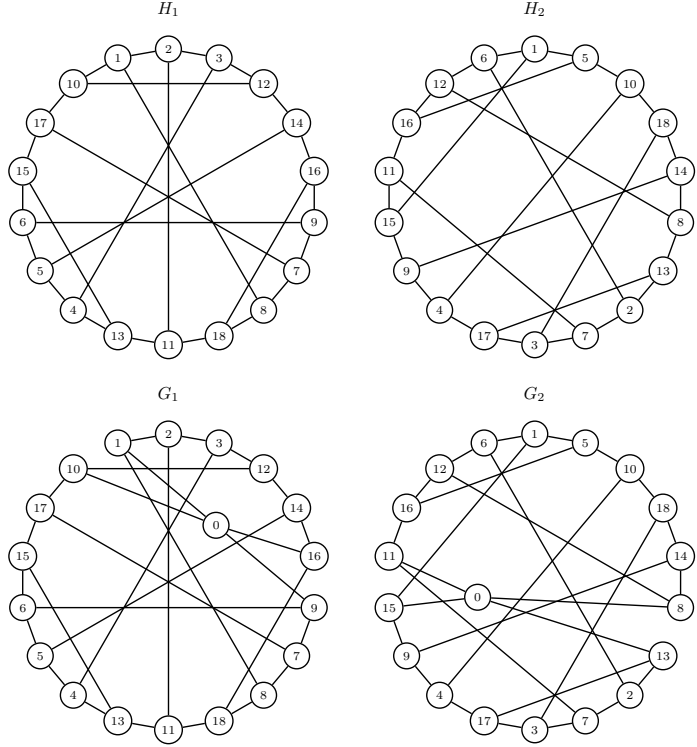
\begin{figure}[h]
\begin{center}
\resizebox{!}{5cm}{
\begin{minipage}{15cm}
\begin{tabular}{cc}
$H_1$ & $H_2$\\
      &      \\
\begin{pspicture}(-3.5,-3)(3.5,3)
\psset{unit=3}
\SpecialCoor
\cnodeput(1;110){N1}{\scriptsize $\, 1 \,$}
\cnodeput(1;90){N2}{\scriptsize $\, 2 \,$}
\cnodeput(1;70){N3}{\scriptsize $\, 3 \,$}
\cnodeput(1;50){N12}{\scriptsize $12$}
\cnodeput(1;30){N14}{\scriptsize $14$}
\cnodeput(1;10){N16}{\scriptsize $16$}
\cnodeput(1;350){N9}{\scriptsize $\, 9 \,$}
\cnodeput(1;330){N7}{\scriptsize $\, 7 \,$}
\cnodeput(1;310){N8}{\scriptsize $\, 8 \,$}
\cnodeput(1;290){N18}{\scriptsize $18$}
\cnodeput(1;270){N11}{\scriptsize $11$}
\cnodeput(1;250){N13}{\scriptsize $13$}
\cnodeput(1;230){N4}{\scriptsize $\, 4 \,$}
\cnodeput(1;210){N5}{\scriptsize $\, 5 \,$}
\cnodeput(1;190){N6}{\scriptsize $\, 6 \,$}
\cnodeput(1;170){N15}{\scriptsize $15$}
\cnodeput(1;150){N17}{\scriptsize $17$}
\cnodeput(1;130){N10}{\scriptsize $10$}
\ncline{N1}{N2}
\ncline{N2}{N3}
\ncline{N3}{N12}
\ncline{N12}{N14}
\ncline{N14}{N16}
\ncline{N16}{N9}
\ncline{N9}{N7}
\ncline{N7}{N8}
\ncline{N8}{N18}
\ncline{N18}{N11}
\ncline{N11}{N13}
\ncline{N13}{N4}
\ncline{N4}{N5}
\ncline{N5}{N6}
\ncline{N6}{N15}
\ncline{N15}{N17}
\ncline{N17}{N10}
\ncline{N10}{N1}
\ncline{N1}{N8}
\ncline{N2}{N11}
\ncline{N3}{N4}
\ncline{N5}{N14}
\ncline{N6}{N9}
\ncline{N7}{N17}
\ncline{N10}{N12}
\ncline{N13}{N15}
\ncline{N16}{N18}
\end{pspicture} & \begin{pspicture}(-3.5,-3)(3.5,3)
\psset{unit=3}
\SpecialCoor
\cnodeput(1;90){N1}{\scriptsize $\, 1 \,$}
\cnodeput(1;70){N5}{\scriptsize $\, 5 \,$}
\cnodeput(1;50){N10}{\scriptsize $10$}
\cnodeput(1;30){N18}{\scriptsize $18$}
\cnodeput(1;10){N14}{\scriptsize $14$}
\cnodeput(1;350){N8}{\scriptsize $\, 8 \,$}
\cnodeput(1;330){N13}{\scriptsize $13$}
\cnodeput(1;310){N2}{\scriptsize $\, 2 \,$}
\cnodeput(1;290){N7}{\scriptsize $\, 7 \,$}
\cnodeput(1;270){N3}{\scriptsize $\, 3 \,$}
\cnodeput(1;250){N17}{\scriptsize $17$}
\cnodeput(1;230){N4}{\scriptsize $\, 4 \,$}
\cnodeput(1;210){N9}{\scriptsize $\, 9 \,$}
\cnodeput(1;190){N15}{\scriptsize $15$}
\cnodeput(1;170){N11}{\scriptsize $11$}
\cnodeput(1;150){N16}{\scriptsize $16$}
\cnodeput(1;130){N12}{\scriptsize $12$}
\cnodeput(1;110){N6}{\scriptsize $\, 6 \,$}
\ncline{N1}{N5}
\ncline{N5}{N10}
\ncline{N10}{N4}
\ncline{N4}{N17}
\ncline{N17}{N13}
\ncline{N13}{N8}
\ncline{N8}{N12}
\ncline{N12}{N16}
\ncline{N16}{N11}
\ncline{N11}{N15}
\ncline{N15}{N9}
\ncline{N9}{N14}
\ncline{N14}{N18}
\ncline{N18}{N3}
\ncline{N3}{N7}
\ncline{N7}{N2}
\ncline{N2}{N6}
\ncline{N6}{N1}
\ncline{N1}{N15}
\ncline{N2}{N13}
\ncline{N3}{N17}
\ncline{N4}{N9}
\ncline{N5}{N16}
\ncline{N6}{N12}
\ncline{N7}{N11}
\ncline{N8}{N14}
\ncline{N10}{N18}
\end{pspicture}\\
      &      \\
$G_1$ & $G_2$\\
      &      \\
\begin{pspicture}(-3.5,-3)(3.5,3)
\psset{unit=3}
\SpecialCoor
\cnodeput(0.5;50){N0}{\scriptsize $\, 0 \,$}
\cnodeput(1;110){N1}{\scriptsize $\, 1 \,$}
\cnodeput(1;90){N2}{\scriptsize $\, 2 \,$}
\cnodeput(1;70){N3}{\scriptsize $\, 3 \,$}
\cnodeput(1;50){N12}{\scriptsize $12$}
\cnodeput(1;30){N14}{\scriptsize $14$}
\cnodeput(1;10){N16}{\scriptsize $16$}
\cnodeput(1;350){N9}{\scriptsize $\, 9 \,$}
\cnodeput(1;330){N7}{\scriptsize $\, 7 \,$}
\cnodeput(1;310){N8}{\scriptsize $\, 8 \,$}
\cnodeput(1;290){N18}{\scriptsize $18$}
\cnodeput(1;270){N11}{\scriptsize $11$}
\cnodeput(1;250){N13}{\scriptsize $13$}
\cnodeput(1;230){N4}{\scriptsize $\, 4 \,$}
\cnodeput(1;210){N5}{\scriptsize $\, 5 \,$}
\cnodeput(1;190){N6}{\scriptsize $\, 6 \,$}
\cnodeput(1;170){N15}{\scriptsize $15$}
\cnodeput(1;150){N17}{\scriptsize $17$}
\cnodeput(1;130){N10}{\scriptsize $10$}
\ncline{N1}{N2}
\ncline{N2}{N3}
\ncline{N3}{N12}
\ncline{N12}{N14}
\ncline{N14}{N16}
%\ncline{N16}{N9}
\ncline{N9}{N7}
\ncline{N7}{N8}
\ncline{N8}{N18}
\ncline{N18}{N11}
\ncline{N11}{N13}
\ncline{N13}{N4}
\ncline{N4}{N5}
\ncline{N5}{N6}
\ncline{N6}{N15}
\ncline{N15}{N17}
\ncline{N17}{N10}
%\ncline{N10}{N1}
\ncline{N1}{N8}
\ncline{N2}{N11}
\ncline{N3}{N4}
\ncline{N5}{N14}
\ncline{N6}{N9}
\ncline{N7}{N17}
\ncline{N10}{N12}
\ncline{N13}{N15}
\ncline{N16}{N18}
\ncline{N0}{N1}
\ncline{N0}{N9}
\ncline{N0}{N10}
\ncline{N0}{N16}
\end{pspicture} & \begin{pspicture}(-3.5,-3)(3.5,3)
\psset{unit=3}
\SpecialCoor
\cnodeput(0.4;195){N0}{\scriptsize $\, 0 \,$}
\cnodeput(1;90){N1}{\scriptsize $\, 1 \,$}
\cnodeput(1;70){N5}{\scriptsize $\, 5 \,$}
\cnodeput(1;50){N10}{\scriptsize $10$}
\cnodeput(1;30){N18}{\scriptsize $18$}
\cnodeput(1;10){N14}{\scriptsize $14$}
\cnodeput(1;350){N8}{\scriptsize $\, 8 \,$}
\cnodeput(1;330){N13}{\scriptsize $13$}
\cnodeput(1;310){N2}{\scriptsize $\, 2 \,$}
\cnodeput(1;290){N7}{\scriptsize $\, 7 \,$}
\cnodeput(1;270){N3}{\scriptsize $\, 3 \,$}
\cnodeput(1;250){N17}{\scriptsize $17$}
\cnodeput(1;230){N4}{\scriptsize $\, 4 \,$}
\cnodeput(1;210){N9}{\scriptsize $\, 9 \,$}
\cnodeput(1;190){N15}{\scriptsize $15$}
\cnodeput(1;170){N11}{\scriptsize $11$}
\cnodeput(1;150){N16}{\scriptsize $16$}
\cnodeput(1;130){N12}{\scriptsize $12$}
\cnodeput(1;110){N6}{\scriptsize $\, 6 \,$}
\ncline{N1}{N5}
\ncline{N5}{N10}
\ncline{N10}{N4}
\ncline{N4}{N17}
\ncline{N17}{N13}
%\ncline{N13}{N8}
\ncline{N8}{N12}
\ncline{N12}{N16}
\ncline{N16}{N11}
%\ncline{N11}{N15}
\ncline{N15}{N9}
\ncline{N9}{N14}
\ncline{N14}{N18}
\ncline{N18}{N3}
\ncline{N3}{N7}
\ncline{N7}{N2}
\ncline{N2}{N6}
\ncline{N6}{N1}
\ncline{N1}{N15}
\ncline{N2}{N13}
\ncline{N3}{N17}
\ncline{N4}{N9}
\ncline{N5}{N16}
\ncline{N6}{N12}
\ncline{N7}{N11}
\ncline{N8}{N14}
\ncline{N10}{N18}
\ncline{N0}{N8}
\ncline{N0}{N11}
\ncline{N0}{N13}
\ncline{N0}{N15}
\end{pspicture}
\\
\end{tabular}
\end{minipage}
}
\caption{The graphs $H_i$ and $G_i$ for $i\in{1,2}$ and $q=19$.}
\label{q19}
\end{center}
\end{figure}

% \begin{figure}[h]
%\begin{center}
%%\includegraphics[width=18cm]{q19.pdf}
%\caption{The graphs $H_i$ and $G_i$ for $i\in{1,2}$ and $p=19$.}
%\label{p19}
%\end{center}
%\end{figure}

    \begin{lemma}\label{q19lemma}
Let $S=T=\{0\}$, $H_1, H_2, G_1$ and $G_2$ be as described above. Then the graph $B^*_{19}(0,0,u)$ is a $(22-u)$--regular
graph of girth $5$  and order $720-38u$, for $0 \le u \le q-1$,.
    \end{lemma}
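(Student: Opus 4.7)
My plan mirrors the proof of Lemma \ref{q13lemma} almost verbatim. The only real work is to read off from Figure \ref{q19} the two weight sets ${\cal P}_\omega$ (contributed by the edges of $H_1$ and $G_1$) and ${\cal L}_\omega$ (contributed by the edges of $H_2$ and $G_2$), computed as $\pm(b-a) \bmod 19$ for every edge $ab$, and then to verify that ${\cal P}_\omega \cap {\cal L}_\omega = \emptyset$ in ${\mathbb Z}^*_{19}$. This bookkeeping is essentially the only substantive content of the proof and also the only place where a clerical slip could invalidate the argument; it is the main obstacle.

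The remaining hypotheses of Theorem \ref{AmalgamThm} are already in place. For girth at least $5$: $H_1$ and $H_2$ are cubic graphs on $18$ vertices whose adjacencies in Figure \ref{q19} can be checked directly to contain no $3$- or $4$-circuits. For $G_i$ ($i=1,2$), the construction removes two independent edges of $H_i$ whose endpoints have pairwise disjoint neighbourhoods in $H_i$, so Lemma \ref{TchInd} applies and yields $g(G_i) \ge 5$. With $S = T = \{0\}$, $|S| = |T| = 1$ and $k = 3$, the regularity and labelling conditions are immediate: $H_1, H_2$ are cubic, and $G_1, G_2$ are each $(3,4)$-regular with a single vertex of degree $4$ labelled to match the unique element of $T$ and $S$ respectively.

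Once these items are checked, Theorem \ref{AmalgamThm} yields that $B^*_{19}(0,0,u)$ is $(q+k-u) = (22-u)$-regular of girth at least $5$ and of order $2q(q-u) - |S| - |T| = 38(19-u) - 2 = 720 - 38u$ for every $0 \le u \le q-1$. Finally, since $H_1$ contains a $5$-circuit (for instance $(1,8,7,17,10)$, easily read off from Figure \ref{q19}), Remark \ref{sharp5}(i) gives that the girth of $B^*_{19}(0,0,u)$ is exactly $5$, completing the argument.
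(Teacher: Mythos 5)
Your proposal is correct and follows the paper's proof in essentially the same way: compute the weight sets ${\cal P}_\omega=\pm\{1,2,3,7,9\}$ and ${\cal L}_\omega=\pm\{4,5,6,8\}$, observe they are disjoint, and invoke Theorem \ref{AmalgamThm} together with Remark \ref{sharp5} (your $5$--circuit $(1,8,7,17,10)$ in $H_1$ is indeed present in Figure \ref{q19}). The paper's own proof is just a terser version of the same argument, stating the weight sets and citing the theorem without spelling out the verification of the remaining hypotheses.
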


    \begin{proof}
The weights of these graphs are ${\cal{P}}_w=\pm \{ 1, 2, 3,  7,  9\}$ and
   ${\cal{L}}_w=\pm \{ 4, 5, 6,  8\}$. Thus, by Theorem \ref{AmalgamThm},
the graph $B^*_{19}(0,0,u)$ is a $(22-u)$--regular
graph of girth $5$ and order $38(19-u)-2=720-38u$, for $0 \le u \le q-1$.
    \end{proof}

     \begin{itemize*}
 \item For $u=0$, we obtain a $22$--regular graph of girth $5$ and order $720$,
 with exactly the same order as the $(22,5)$--graphs that appear in $\cite{J05,F10}$;
 \item  for $u=1$, we obtain a $21$--regular graph of girth $5$ and $682$ vertices
which has two vertices less than the $(21,5)$--graphs that appear in  $\cite{J05,F10}$;
% \item The $(22-u,5)$--graphs, for $2 \le u \le 18$, that we constructed, have more vertices
% than previously known ones.
 \end{itemize*}

\subsection{$q=11$}\label{q11section}

For $q=11$ we are going to remove $6$ vertices from $B_{11}$ instead of $2$, but we will construct a
$(q+2)$--regular graph instead of a $(q+3)$--regular one.

%This will be the only example in which $T = \emptyset$ and this implies that $G_1$ will be a regular graph,
%while $H_2$ and $G_2$ will be bi--regular graphs.

    \begin{lemma}\label{q11-lemma}
Let $S=\{0,1,2,4,6,8\}$ and $T=\emptyset$. Let $H_1=(3,5,10,7,9)$ be a $5$--circuit with weights $\pm \{2, 3, 5\}$,
$G_1=(0,2,4,6,8,10,1,3,5,7,9)$ be a $11$--circuit with weight $\{\pm 2\}$,
and $H_2=G_2 =(0,1,2,3,4,5,6,7,8,9,10) \cup (0,4) \cup (2,6) \cup (1,8)$ be a $11$--circuit with three chords
and weights $\pm \{1, 4\}$ (see Figure \ref{q11}). Then the graph $B^*_{11}(S,T,u)$ is a
$(13-u)$--regular graph of girth $5$ and order $22(11-u)-6=236-22u$, for $u \le q-1$.
In particular, we obtain a $13$-regular graph of girth $5$ and order $236$ for $u=0$.
    \end{lemma}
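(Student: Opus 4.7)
The plan is to apply Theorem \ref{AmalgamThm} directly with $k=2$, so the proof reduces to verifying its hypotheses for the prescribed $H_1,H_2,G_1,G_2$. Since $T=\emptyset$ and $|S|=6$, the theorem demands: $H_1$ to be $2$-regular of order $q-|S|=5$; $G_1$ to be $2$-regular of order $q=11$; and both $H_2,G_2$ to be $(2,3)$-regular of order $11$ with exactly $|S-T|=|S|=6$ vertices of degree $3$, labelled by $S=\{0,1,2,4,6,8\}$. The $5$-circuit $H_1$ and the $11$-circuit $G_1$ satisfy this trivially. In $H_2=G_2$ the underlying $11$-cycle contributes degree $2$ at every vertex, and the three chords $(0,4),(2,6),(1,8)$ raise the degree of precisely the six vertices of $S$ to $3$, matching the required labelling.

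Next I would compute the Cayley weights and check disjointness. The consecutive differences along $H_1=(3,5,10,7,9)$ are $\pm 2,\pm 5,\pm 3,\pm 2,\pm 6\equiv\pm 5 \pmod{11}$, and along $G_1$ every consecutive difference equals $\pm 2 \pmod{11}$, giving $\mathcal{P}_\omega=\pm\{2,3,5\}$. In $H_2=G_2$, the cycle edges have weight $\pm 1$ and the three chords have weights $\pm 4,\pm 4,\pm 7\equiv\pm 4 \pmod{11}$, giving $\mathcal{L}_\omega=\pm\{1,4\}$. Thus $\mathcal{P}_\omega\cap\mathcal{L}_\omega=\emptyset$, as the theorem requires.

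The main obstacle is verifying that each block has girth at least $5$. This is immediate for the cycles $H_1$ and $G_1$. For $H_2$ the underlying $11$-cycle has girth $11$ and the three chords have pairwise disjoint endpoint sets, so any short circuit uses at most one chord. For each chord $(a,b)$ I would check (i) $N_{H_2}(a)\cap N_{H_2}(b)=\emptyset$ (no triangle through the chord) and (ii) no edge of $H_2$ joins $N_{H_2}(a)\setminus\{b\}$ to $N_{H_2}(b)\setminus\{a\}$ (no $4$-cycle through the chord); for instance $(0,4)$ gives $\{1,10\}$ versus $\{3,5\}$, and none of the four possible connecting edges is a cycle edge or a chord. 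A short circuit using two distinct chords would have to visit all four of their endpoints, but in each such $4$-element vertex set the induced subgraph of $H_2$ contains at most three edges (the two chords together with at most one cycle edge), which cannot close a $4$-cycle. Hence $H_2$ has girth at least $5$.

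With all hypotheses in place, Theorem \ref{AmalgamThm} yields that $B^*_{11}(S,\emptyset,u)$ is $(11+2-u)=(13-u)$-regular of girth at least $5$, and its order---unchanged by the amalgam operation---follows from Lemma \ref{Red2} as $2q(q-u)-|S|-|T|=22(11-u)-6$. Remark \ref{sharp5}(i) then promotes the girth to exactly $5$, since $H_1$ is itself a $5$-circuit embedded in the amalgam. Specialising to $u=0$ delivers the claimed $13$-regular graph of girth $5$ on $236$ vertices.
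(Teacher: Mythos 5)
Your proposal is correct and follows the same route as the paper: the paper's proof simply records $\mathcal{P}_\omega=\pm\{2,3,5\}$ and $\mathcal{L}_\omega=\pm\{1,4\}$ and invokes Theorem \ref{AmalgamThm}, exactly as you do. Your additional verifications (degree/order hypotheses with $k=2$, girth of $H_2=G_2$, and the appeal to Remark \ref{sharp5}(i) for girth exactly $5$) are all accurate and merely make explicit what the paper leaves to the reader.
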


\setlength{\intextsep}{0cm}
\setlength{\abovecaptionskip}{-0.5cm}
\setlength{\belowcaptionskip}{-0.5cm}
\begin{figure}[h]
\begin{center}
\resizebox{!}{2cm}{
\begin{minipage}{5cm}
\begin{pspicture}(-3,-3)(3,3)
\psset{unit=2}
\SpecialCoor
\cnodeput(1;90){N0}{\scriptsize $0$}
\cnodeput(1;57.28){N1}{\scriptsize $1$}
\cnodeput(1;24.56){N2}{\scriptsize $2$}
\cnodeput(1;351.84){N3}{\scriptsize $3$}
\cnodeput(1;319.12){N4}{\scriptsize $4$}
\cnodeput(1;286.4){N5}{\scriptsize $5$}
\cnodeput(1;253.68){N6}{\scriptsize $6$}
\cnodeput(1;220.96){N7}{\scriptsize $7$}
\cnodeput(1;188.24){N8}{\scriptsize $8$}
\cnodeput(1;155.52){N9}{\scriptsize $9$}
\cnodeput(1;122.8){N10}{\scriptsize $10$}
\ncline{N0}{N1}
\ncline{N1}{N2}
\ncline{N2}{N3}
\ncline{N3}{N4}
\ncline{N4}{N5}
\ncline{N5}{N6}
\ncline{N6}{N7}
\ncline{N7}{N8}
\ncline{N8}{N9}
\ncline{N9}{N10}
\ncline{N10}{N0}
\ncline{N0}{N4}
\ncline{N1}{N8}
\ncline{N2}{N6}
\end{pspicture}
\end{minipage}
}
\caption{The graphs $H_2=G_2$ for $q=11$.}
\label{q11}
\end{center}
\end{figure}
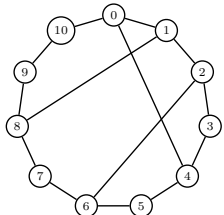

\

    \begin{proof}
Since ${\cal{P}}_{\omega}=\pm \{2, 3, 5\}$ and ${\cal{L}}_{\omega}=\pm \{1, 4\}$,
the thesis follows by Theorem \ref{AmalgamThm}.
%for $u \le q-1$, the graph $B^*_{11}(S,T,u)$ is a
%$(13-u)$--regular graph of girth $5$ and order $22(11-u)-6=236-22u$.
%In particular, we obtain a $13$-regular graph of girth $5$ and order $236$ for $u=0$
%Note also that since $H_1$ is a $5$--circuit, the upper bound on $u$ is as large as it can be.
    \end{proof}

Note that the graph $B^*_{11}(S,T,0)$ has four vertices less than those constructed in $\cite{J05,F10}$.

%¦\floatsep: space left between floats.
%¦\textfloatsep: space between last top float or first bottom float and the text.
%¦\intextsep : space left on top and bottom of an in-text float.
%¦\dbltextfloatsep is \textfloatsep for 2 column output.
%¦\dblfloatsep is \floatsep for 2 column output.
%¦\abovecaptionskip: space above caption
%¦\belowcaptionskip: space below caption

%\scalebox{0.7}{
%\begin{minipage}{10cm}
%\end{minipage}}

\subsection{$q=17$}\label{q17section}

For $q=17$ we are going to remove $6$ vertices instead of $2$
and construct a $(q+3)$--regular graph, obtaining a better result than
the one obtained in \cite{B98}.

    \begin{lemma}\label{q17lemma}
Let $S=T=\{7,10,12\}$, $H_1, H_2, G_1$ and $G_2$ as in Figure \ref{q17}.
The graphs $G_1$ and $G_2$ have order $17$, girth $5$ and are bi--regular
with three vertices of degree four and all other vertices of degree $3$.
Then the graph $B^*_{17}(S,T,u)$ is a $(20-u)$--regular
graph of girth $5$  and order $572-34u$, for $u \ge q-1$.
    \end{lemma}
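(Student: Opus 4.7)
The plan is to apply Theorem \ref{AmalgamThm} with the parameters $q=17$, $S=T=\{7,10,12\}$, $|S|=|T|=3$, and $k=3$, following exactly the template used in Lemmas \ref{q13lemma} and \ref{q19lemma}. First I would verify from Figure \ref{q17} that $H_1$ and $H_2$ are cubic on $q-|S|=14$ vertices with girth at least $5$, and that $G_1$ and $G_2$ are graphs of order $q=17$ with girth at least $5$ that are bi-regular with three vertices of degree $4$ (labelled $7,10,12$, in correspondence with $S=T$) and the remaining $14$ vertices of degree $3$. Where possible this verification is carried out by inspection of the figure, but if $G_i$ is obtained from $H_i$ by deleting three pairwise independent edges whose endpoints have pairwise disjoint neighbourhoods and joining their endpoints to the three new vertices, then a repeated application of Lemma \ref{TchInd} gives the girth condition for $G_i$ automatically.

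Next I would list the Cayley colours ${\cal P}_\omega$ of the edges of $H_1\cup G_1$ and ${\cal L}_\omega$ of the edges of $H_2\cup G_2$, by reading off the absolute value of the second-coordinate differences modulo $17$ along every edge drawn in Figure \ref{q17}. The critical step is then to verify
\[
{\cal P}_\omega \cap {\cal L}_\omega = \emptyset \quad \text{in } \mathbb{Z}^*_{17},
\]
which is the remaining hypothesis of Theorem \ref{AmalgamThm}. Since $\mathbb{Z}^*_{17}$ has only eight $\pm$-pairs of weights $\pm\{1,\dots,8\}$ and both sides together contain a substantial number of edges, this disjointness is the sole nontrivial constraint that the construction in Figure \ref{q17} must satisfy, and it is precisely the reason one chooses $|S|=|T|=3$ rather than $|S|=|T|=1$.

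Once the disjointness of weights is checked, Theorem \ref{AmalgamThm} immediately yields that $B^*_{17}(S,T,u)$ has girth at least $5$, is $(q+k-u)=(20-u)$-regular, and has order
\[
2(q^2-qu)-|S|-|T|=2(289-17u)-6=572-34u,
\]
for every $0\le u\le q-1$. The girth is exactly $5$ by Remark \ref{sharp5}, either because $1\in{\cal P}_\omega$ (so a $5$-circuit of the form $((0,i),(0,j),[1,j],(1,i),[0,i])$ exists when $u<q-1$) or because one of the $H_i, G_i$ visibly contains a $5$-circuit in Figure \ref{q17}.

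The main obstacle is the weight-disjointness check: with three vertices of $S$ and three of $T$ the amalgam involves quite a few edges, and ensuring that no weight in $H_1\cup G_1$ coincides, up to sign modulo $17$, with any weight in $H_2\cup G_2$ is where the careful design of the four graphs in Figure \ref{q17} matters. All remaining verifications — counting vertices, counting degrees, and invoking Lemma \ref{Red2} for the degrees in the underlying $B_{17}(S,T,u)$ — are routine once the figure is in hand.
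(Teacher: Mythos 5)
Your proposal is correct and follows essentially the same route as the paper: the paper's proof simply records ${\cal P}_\omega=\pm\{1,3,4,5\}$ and ${\cal L}_\omega=\pm\{2,6,7,8\}$, notes their disjointness, and invokes Theorem \ref{AmalgamThm} to obtain the $(20-u)$--regular graph of girth $5$ and order $34(17-u)-6=572-34u$. Your additional remarks on verifying the girth of the $G_i$ (by inspection or via Lemma \ref{TchInd}) elaborate on what the paper leaves implicit, but do not change the argument.
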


    \begin{proof}
In this case  ${\cal{P}}_w=\pm \{1, 3, 4, 5\}$ and ${\cal{L}}_w=\pm \{ 2, 6, 7, 8\}$,
thus, by Theorem \ref{AmalgamThm}, the graph $B^*_{17}(S,T,u)$ is a $(20-u)$--regular
graph of girth $5$ and order $34(17-u)-6=572-34u$ for $u \ge q-1$.
    \end{proof}

In $\cite{J05}$ the author constructs $(k,5)$--graphs of order $32(k-2)$, while we have constructed
$(k,5)$--graphs of order $34(k-3)$ which have $44-2k$ fewer vertices, for $k \in \{4, \ldots, 20\}$.
In particular, we obtain a $20$-regular graph of girth $5$ and order $572$ for $u=0$ which has four vertices less than the one constructed in $\cite{J05}$. Note also that as unlabeled graphs $H_1 \cong H_2$ and they are both isomorphic to the Heawood graph.

\setlength{\intextsep}{10pt}
\setlength{\abovecaptionskip}{1cm}
\setlength{\belowcaptionskip}{-1cm}
\begin{figure}[h]
\begin{center}
\resizebox{!}{4cm}{
\begin{minipage}{15cm}
\begin{tabular}{cc}
$H_1$ & $H_2$\\
      &      \\
\begin{pspicture}(-3.5,-3)(3.5,3)
\psset{unit=3}
\SpecialCoor
\cnodeput(1;90){N0}{\scriptsize $\, 0 \,$}
\cnodeput(1;64.29){N16}{\scriptsize $16$}
\cnodeput(1;38.58){N15}{\scriptsize $15$}
\cnodeput(1;12.87){N2}{\scriptsize $\, 2 \,$}
\cnodeput(1;347.16){N1}{\scriptsize $\, 1 \,$}
\cnodeput(1;321.45){N13}{\scriptsize $13$}
\cnodeput(1;295.74){N14}{\scriptsize $14$}
\cnodeput(1;270.03){N11}{\scriptsize $11$}
\cnodeput(1;244.32){N8}{\scriptsize $\, 8 \,$}
\cnodeput(1;218.61){N5}{\scriptsize $\, 5 \,$}
\cnodeput(1;192.9){N4}{\scriptsize $\, 4 \,$}
\cnodeput(1;167.19){N9}{\scriptsize $\, 9 \,$}
\cnodeput(1;141.48){N6}{\scriptsize $\, 6 \,$}
\cnodeput(1;115.77){N3}{\scriptsize $\, 3 \,$}
\ncline{N0}{N16}
\ncline{N16}{N15}
\ncline{N15}{N2}
\ncline{N2}{N1}
\ncline{N1}{N13}
\ncline{N13}{N14}
\ncline{N14}{N11}
\ncline{N11}{N8}
\ncline{N8}{N5}
\ncline{N5}{N4}
\ncline{N4}{N9}
\ncline{N9}{N6}
\ncline{N6}{N3}
\ncline{N3}{N0}
\ncline{N0}{N13}
\ncline{N15}{N11}
\ncline{N1}{N5}
\ncline{N14}{N9}
\ncline{N8}{N3}
\ncline{N4}{N16}
\ncline{N6}{N2}
\end{pspicture} & \begin{pspicture}(-3.5,-3)(3.5,3)
\psset{unit=3}
\SpecialCoor
\cnodeput(1;90){N0}{\scriptsize $\, 0 \,$}
\cnodeput(1;64.29){N2}{\scriptsize $\, 2 \,$}
\cnodeput(1;38.58){N4}{\scriptsize $\, 4 \,$}
\cnodeput(1;12.87){N14}{\scriptsize $14$}
\cnodeput(1;347.16){N3}{\scriptsize $\, 3 \,$}
\cnodeput(1;321.45){N9}{\scriptsize $\, 9 \,$}
\cnodeput(1;295.74){N16}{\scriptsize $16$}
\cnodeput(1;270.03){N6}{\scriptsize $\, 6 \,$}
\cnodeput(1;244.32){N13}{\scriptsize $13$}
\cnodeput(1;218.61){N5}{\scriptsize $\, 5 \,$}
\cnodeput(1;192.9){N11}{\scriptsize $11$}
\cnodeput(1;167.19){N1}{\scriptsize $\, 1 \,$}
\cnodeput(1;141.48){N8}{\scriptsize $\, 8 \,$}
\cnodeput(1;115.77){N15}{\scriptsize $15$}
\ncline{N0}{N2}
\ncline{N2}{N4}
\ncline{N4}{N14}
\ncline{N14}{N3}
\ncline{N3}{N9}
\ncline{N9}{N16}
\ncline{N16}{N6}
\ncline{N6}{N13}
\ncline{N13}{N5}
\ncline{N5}{N11}
\ncline{N11}{N1}
\ncline{N1}{N8}
\ncline{N8}{N15}
\ncline{N15}{N0}
\ncline{N0}{N9}
\ncline{N4}{N6}
\ncline{N3}{N5}
\ncline{N16}{N1}
\ncline{N13}{N15}
\ncline{N11}{N2}
\ncline{N8}{N14}
\end{pspicture}\\
      &      \\
$G_1$ & $G_2$\\
      &      \\
\begin{pspicture}(-3.5,-3)(3.5,3)
\psset{unit=3}
\SpecialCoor
\cnodeput(1;90){N0}{\scriptsize $\, 0 \,$}
\cnodeput(1;68.824){N1}{\scriptsize $\, 1 \,$}
\cnodeput(1;47.648){N15}{\scriptsize $15$}
\cnodeput(1;26.472){N16}{\scriptsize $16$}
\cnodeput(1;5.296){N13}{\scriptsize $13$}
\cnodeput(1;344.12){N12}{\scriptsize $12$}
\cnodeput(1;322.944){N11}{\scriptsize $11$}
\cnodeput(1;301.768){N10}{\scriptsize $10$}
\cnodeput(1;280.592){N9}{\scriptsize $\, 9 \,$}
\cnodeput(1;259.416){N4}{\scriptsize $\, 4 \,$}
\cnodeput(1;238.24){N3}{\scriptsize $\, 3 \,$}
\cnodeput(1;217.064){N8}{\scriptsize $\, 8 \,$}
\cnodeput(1;195.888){N5}{\scriptsize $\, 5 \,$}
\cnodeput(1;174.172){N6}{\scriptsize $\, 6 \,$}
\cnodeput(1;153.536){N7}{\scriptsize $\, 7 \,$}
\cnodeput(1;132.36){N2}{\scriptsize $\, 2 \,$}
\cnodeput(1;111.184){N14}{\scriptsize $14$}
\ncline{N0}{N1}
\ncline{N1}{N15}
\ncline{N15}{N16}
\ncline{N16}{N13}
\ncline{N13}{N12}
\ncline{N12}{N11}
\ncline{N11}{N10}
\ncline{N10}{N9}
\ncline{N9}{N4}
\ncline{N4}{N3}
\ncline{N3}{N8}
\ncline{N8}{N5}
\ncline{N5}{N6}
\ncline{N6}{N7}
\ncline{N7}{N2}
\ncline{N2}{N14}
\ncline{N14}{N0}
\ncline{N0}{N12}
\ncline{N1}{N6}
\ncline{N2}{N16}
\ncline{N3}{N15}
\ncline{N4}{N7}
\ncline{N5}{N10}
\ncline{N7}{N11}
\ncline{N8}{N12}
\ncline{N9}{N13}
\ncline{N10}{N14}
\end{pspicture} & \begin{pspicture}(-3.5,-3)(3.5,3)
\psset{unit=3}
\SpecialCoor
\cnodeput(1;90){N0}{\scriptsize $\, 0 \,$}
\cnodeput(1;68.824){N2}{\scriptsize $\, 2 \,$}
\cnodeput(1;47.648){N12}{\scriptsize $12$}
\cnodeput(1;26.472){N5}{\scriptsize $\, 5 \,$}
\cnodeput(1;5.296){N3}{\scriptsize $\, 3 \,$}
\cnodeput(1;344.12){N14}{\scriptsize $14$}
\cnodeput(1;322.944){N4}{\scriptsize $\, 4 \,$}
\cnodeput(1;301.768){N6}{\scriptsize $\, 6 \,$}
\cnodeput(1;280.592){N8}{\scriptsize $\, 8 \,$}
\cnodeput(1;259.416){N10}{\scriptsize $10$}
\cnodeput(1;238.24){N1}{\scriptsize $\, 1 \,$}
\cnodeput(1;217.064){N7}{\scriptsize $\, 7 \,$}
\cnodeput(1;195.888){N16}{\scriptsize $16$}
\cnodeput(1;174.172){N9}{\scriptsize $\, 9 \,$}
\cnodeput(1;153.536){N11}{\scriptsize $11$}
\cnodeput(1;132.36){N13}{\scriptsize $13$}
\cnodeput(1;111.184){N15}{\scriptsize $15$}
\ncline{N0}{N2}
\ncline{N1}{N12}
\ncline{N2}{N12}
\ncline{N12}{N5}
\ncline{N5}{N3}
\ncline{N3}{N14}
\ncline{N14}{N4}
\ncline{N4}{N6}
\ncline{N6}{N8}
\ncline{N8}{N10}
\ncline{N10}{N1}
\ncline{N1}{N7}
\ncline{N7}{N16}
\ncline{N16}{N9}
\ncline{N9}{N11}
\ncline{N11}{N13}
\ncline{N13}{N15}
\ncline{N15}{N0}
\ncline{N0}{N10}
\ncline{N2}{N9}
\ncline{N3}{N10}
\ncline{N4}{N11}
\ncline{N5}{N13}
\ncline{N6}{N12}
\ncline{N7}{N14}
\ncline{N7}{N15}
\ncline{N8}{N16}
\end{pspicture}
\\
\end{tabular}
\end{minipage}
}
\caption{The graphs $H_i$ and $G_i$ for $i\in{1,2}$ and $q=17$.}
\label{q17}
\end{center}
\end{figure}
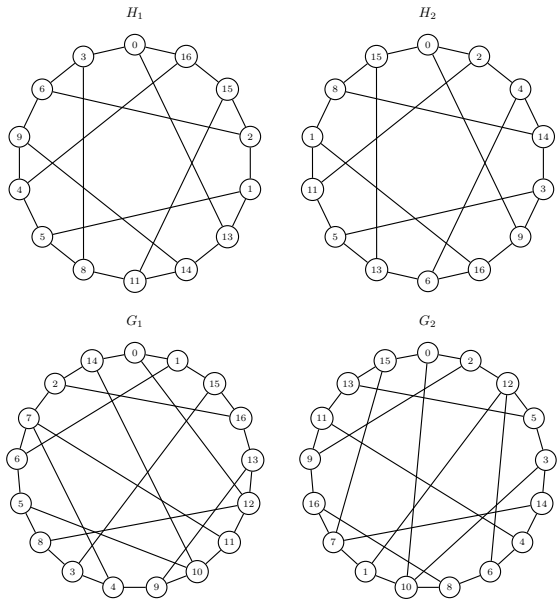

%Note that even though $H_1$ has girth $6$, it
%contains the weight $1$ so by Theorem \ref{AmalgamThm} the upper bound on $u$ is as large as it can be.
\subsection*{Acknowledgment}
\footnotesize
{ Research   supported by the Ministerio de Educaci\'on y Ciencia,
Spain, the European Regional Development Fund (ERDF) under
project MTM2008-06620-C03-02, CONACyT-M\'exico under project 57371 and PAPIIT-M\'exico under project 104609-3.}\\

%%% Deleted old stuff from here on January 2011

\end{document}